\newtheorem{thm}{Theorem}[section]
\newtheorem{cor}[thm]{Corollary}
\newtheorem{lem}[thm]{Lemma}
\newtheorem{propo}[thm]{Proposition}
\theoremstyle{definition}
\theoremstyle{remark}
\title[Littlewood-Paley-Stein $g_k$-functions]{Discrete harmonic analysis \\associated with Jacobi expansions III: \\ the Littlewood-Paley-Stein $g_k$-functions and the Laplace type multipliers}
\author[A. Arenas]{Alberto Arenas}
\address{Departamento de Matem\'aticas y Computaci\'on,
Universidad de La Rioja, Complejo Cient\'{\i}fico-Tecnol\'ogico,
Calle Madre de Dios 53, 26006 Logro\~no, Spain}
\email{alberto.arenas@unirioja.es}
\author[\'O. Ciaurri]{\'Oscar Ciaurri}
\address{Departamento de Matem\'aticas y Computaci\'on,
Universidad de La Rioja, Complejo Cient\'{\i}fico-Tecnol\'ogico,
Calle MAdre de Dios 53, 26006 Logro\~no, Spain}
\email{oscar.ciaurri@unirioja.es}
\author[E. Labarga]{Edgar Labarga}
\address{Departamento de Matem\'aticas y Computaci\'on,
Universidad de La Rioja, Complejo Cient\'{\i}fico-Tecnol\'ogico,
Calle Madre de Dios 53, 26006 Logro\~no, Spain}
\email{edgar.labarga@unirioja.es}
\keywords{Discrete harmonic analysis, Jacobi polynomials, Littlewood-Paley-Stein $g_k$-functions, Laplace type multipliers, weighted norm inequalities}
\subjclass[2010]{Primary: 42C10.}
\thanks{The first-named author was supported by a predoctoral research grant of the Government of Comunidad Aut\'{o}noma de La Rioja. The second-named author was supported by grant MTM2015-65888-C04-4-P MINECO/FEDER, UE, from Spanish Government. The third-named author was supported by a predoctoral research grant of the University of La Rioja.}
\begin{document}

\begin{abstract}
The research about harmonic analysis associated with Jacobi expansions carried out in \cite{ACL-JacI} and \cite{ACL-JacII} is continued in this paper. Given the operator $\mathcal{J}^{(\alpha,\beta)}=J^{(\alpha,\beta)}-I$, where $J^{(\alpha,\beta)}$ is the three-term recurrence relation for the normalized Jacobi polynomials and $I$ is the identity operator, we define the corresponding Littlewood-Paley-Stein $g_k^{(\alpha,\beta)}$-functions associated with it and we prove an equivalence of norms with weights for them. As a consequence, we deduce a result for Laplace type multipliers.
\end{abstract}

\maketitle

\section{Introduction}
We begin by setting some aspects of our context as in the previous papers \cite{ACL-JacI, ACL-JacII}. For $\alpha,\beta>-1$, we take the sequences  $\{a_{n}^{(\alpha,\beta)}\}_{n\in\mathbb{N}}$ and $\{b_{n}^{(\alpha,\beta)}\}_{n\in\mathbb{N}}$ given by
\[
a_n^{(\alpha,\beta)}=\frac{2}{2n+\alpha+\beta+2}
\sqrt{\frac{(n+1)(n+\alpha+1)(n+\beta+1)(n+\alpha+\beta+1)}{(2n+\alpha+\beta+1)(2n+\alpha+\beta+3)}},\quad n\geq 1,
\]
\[
a_{0}^{(\alpha,\beta)} = \frac{2}{\alpha+\beta+2}\sqrt{\frac{(\alpha+1)(\beta+1)}{(\alpha+\beta+3)}},
\]
\[
b_n^{(\alpha,\beta)}=\frac{\beta^2-\alpha^2}{(2n+\alpha+\beta)(2n+\alpha+\beta+2)},\quad n\geq 1,
\]
and
\[
b_{0}^{(\alpha,\beta)} = \frac{\beta-\alpha}{\alpha+\beta+2}.
\]
For each sequence $\{f(n)\}_{n\ge 0}$, we define the operator $\{J^{(\alpha,\beta)}f(n)\}_{n\ge 0}$ by the relation
\[
J^{(\alpha,\beta)}f(n)=a_{n-1}^{(\alpha,\beta)}f(n-1)+b_n^{(\alpha,\beta)}f(n)+ a_{n}^{(\alpha,\beta)}f(n+1), \qquad n\ge 1,
\]
and $J^{(\alpha,\beta)}f(0)=b_0^{(\alpha,\beta)}f(0)+ a_{0}^{(\alpha,\beta)}f(1)$.

Defining the Jacobi polynomials $\{P^{(\alpha,\beta)}_n(x)\}_{n\ge 0}$ through the Rodrigues' formula (see \cite[p.~67, eq.~(4.3.1)]{Szego})
\[
P_n^{(\alpha,\beta)}(x)=\frac{(-1)^n}{2^n \, n!}(1-x)^{-\alpha}(1+x)^{-\beta}\frac{d^n}{dx^n}\left\{(1-x)^{\alpha+n}(1+x)^{\beta+n}\right\},
\]
it is well known that they are orthogonal on the interval $[-1,1]$ with respect to the measure
\[
d\mu_{\alpha,\beta}(x)=(1-x)^\alpha(1+x)^{\beta}\,dx.
\]
Moreover, the sequence $\{p_n^{(\alpha,\beta)}(x)\}_{n\ge 0}$, given by $p_n^{(\alpha,\beta)}(x)=w_n^{(\alpha,\beta)}P_n^{(\alpha,\beta)}(x)$ where
\begin{equation*}
\begin{aligned}
w_n^{(\alpha,\beta)}& = \frac{1}{\|P_n^{(\alpha,\beta)}\|_{L^2((-1,1),d\mu_{\alpha,\beta})}} \\&= \sqrt{\frac{(2n+\alpha+\beta+1)\, n!\,\Gamma(n+\alpha+\beta+1)}{2^{\alpha+\beta+1}\Gamma(n+\alpha+1)\,\Gamma(n+\beta+1)}},\quad n\geq1,
\end{aligned}
\end{equation*}
and
\[
w_{0}^{(\alpha,\beta)} = \frac{1}{\|P_{0}^{(\alpha,\beta)}\|_{L^2((-1,1),d\mu_{\alpha,\beta})}} = \sqrt{\frac{\Gamma(\alpha+\beta+2)}{2^{\alpha+\beta+1}\Gamma(\alpha+1)\Gamma(\beta+1)}},
\]
is an orthonormal and complete system in $L^2((-1,1),d\mu_{\alpha,\beta})$, and it satisfies that
\[
J^{(\alpha,\beta)}p^{(\alpha,\beta)}_n(x)=xp_n^{(\alpha,\beta)}(x).
\]
Along this paper we will work with the operator
\[
\mathcal{J}^{(\alpha,\beta)}f(n)=(J^{(\alpha,\beta)}-I)f(n), \qquad x\in [-1,1],
\]
where $I$ denotes the identity operator, instead of $J^{(\alpha,\beta)}$ since the translated operator $-\mathcal{J}^{(\alpha,\beta)}$ is non-negative. In fact, the spectrum of $J^{(\alpha,\beta)}$ is the interval $[-1,1]$, so that the spectrum of $-\mathcal{J}^{(\alpha,\beta)}$ is $[0,2]$.

This paper continues in a natural way the study of harmonic analysis associated with $\mathcal{J}^{(\alpha,\beta)}$ of \cite{ACL-JacI} and \cite{ACL-JacII}. In \cite{ACL-JacI} we carried out  an exhaustive analysis of the heat semigroup for $\mathcal{J}^{(\alpha,\beta)}$ and in \cite{ACL-JacII} we investigated the Riesz transform. The main aim of this paper is to study another classical operator in harmonic analysis, the Littlewood-Paley-Stein $g_k$-function.

For an appropriate sequence $\{f(n)\}_{n\in \mathbb{N}}$ and $t>0$, the heat semigroup associated with $\mathcal{J}^{(\alpha,\beta)}$ is defined by the identity
\[
W_t^{(\alpha,\beta)}f(n)=\sum_{m=0}^\infty f(m)K^{(\alpha,\beta)}_t(m,n),
\]
where
\[
K_t^{(\alpha,\beta)}(m,n)=\int_{-1}^1 e^{-(1-x)t}p_m^{(\alpha,\beta)}(x)p_n^{(\alpha,\beta)}(x)\, d\mu_{\alpha,\beta}(x).
\]
Then, the Littlewood-Paley-Stein $g_k^{(\alpha,\beta)}$-functions in this context are given by
\begin{equation}
\label{eq:def-gk}
g^{(\alpha,\beta)}_k(f)(n)=\left(\int_{0}^{\infty}t^{2k-1}\left|\frac{\partial^k}{\partial t^k}W_t^{(\alpha,\beta)}f(n)\right|^2\, dt\right)^{1/2}, \qquad k\ge 1.
\end{equation}

The history of $g$-functions goes back to the seminal paper by J. E. Littlewood and R. E. A. C. Paley \cite{LP}, published in 1936, where they introduced the $g$-function for the trigonometric Fourier series. The extension to the Fourier transform on $\mathbb{R}^n$ was given by E. M. Stein in \cite{Stein-TAMS} more than twenty years later. He himself treated the question in a very abstract setting in \cite{Stein-Rojo}. In the last few years, there has been a deep research of these operators in different contexts and considering weights. For example, for the Hankel transform they were studied in \cite{BCN}, for Jacobi expansions in \cite{NSS}, for Laguerre expansions in \cite{NStS}, for Hermite expansions in \cite{ST}, and for Fourier-Bessel expansions in \cite{Ciau-Ron}.

Our work on discrete harmonic analysis related to Jacobi polynomials pretends to be a generalization of the work in \cite{Ciau-et-al} for the discrete Laplacian
\begin{equation}
\label{eq:dis-lap}
\Delta_d f(n)=f(n-1)-2f(n)+f(n+1)
\end{equation}
and in \cite{Bet-et-al} in the case of ultraspherical expansions, which corresponds with the case $\alpha=\beta=\lambda-1/2$ of $J^{(\alpha,\beta)}$. In both cases the corresponding $g_k$-functions were analysed (in \cite{Ciau-et-al} only for $k=1$).

To present our main result we need to introduce some notation. A weight on $\mathbb{N}$ will be a strictly positive sequence $w=\{w(n)\}_{n\ge 0}$. We consider the weighted $\ell^{p}$-spaces
\[
\ell^p(\mathbb{N},w)=\left\{f=\{f(n)\}_{n\ge 0}: \|f\|_{\ell^{p}(\mathbb{N},w)}:=\Bigg(\sum_{m=0}^{\infty}|f(m)|^p w(m)\Bigg)^{1/p}<\infty\right\},
\]
$1\le p<\infty$, and we simply write $\ell^p(\mathbb{N})$ when $w(n)=1$ for all $n\in \mathbb{N}$.

Furthermore, we say that a weight $w(n)$ belongs to the discrete Muckenhoupt $A_p(\mathbb{N})$ class, $1<p<\infty$, provided that
\[
\sup_{\begin{smallmatrix} 0\le n \le m \\ n,m\in \mathbb{N} \end{smallmatrix}} \frac{1}{(m-n+1)^p}\Bigg(\sum_{k=n}^mw(k)\Bigg)\Bigg(\sum_{k=n}^mw(k)^{-1/(p-1)}\Bigg)^{p-1} <\infty,
\]
holds.

The main result of this paper is the following one. 

\begin{thm}
\label{th:main}
Let $\alpha,\beta\ge -1/2$, $1<p<\infty$, and $w\in A_p(\mathbb{N})$. Then,
\begin{equation}
\label{eq:bound-gk}
C_1 \|f\|_{\ell^p(\mathbb{N},w)}\le \|g_k^{(\alpha,\beta)}(f)\|_{\ell^p(\mathbb{N},w)}\le C_2 \|f\|_{\ell^p(\mathbb{N},w)},\qquad f\in\ell^2(\mathbb{N})\cap\ell^{p}(\mathbb{N},w),
\end{equation}
where $C_1$ and $C_2$ are constants independent of $f$.
\end{thm}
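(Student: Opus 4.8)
The plan is to obtain the two inequalities in \eqref{eq:bound-gk} separately: the upper bound through the theory of vector-valued Calderón-Zygmund operators on the space of homogeneous type $\mathbb{N}$ (equipped with the distance $|m-n|$ and the counting measure), and the lower bound by a duality argument that feeds on the upper bound together with an $\ell^2$ identity obtained from the spectral calculus.

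First I would settle the unweighted $\ell^2$ case, which is exact rather than merely an equivalence. Writing $\hat{f}(x)=\sum_{m\ge 0}f(m)p_m^{(\alpha,\beta)}(x)$ and using that $-\mathcal{J}^{(\alpha,\beta)}$ acts as multiplication by $(1-x)$ on the transform side, one has $\partial_t^k W_t^{(\alpha,\beta)}f(n)=(-1)^k\int_{-1}^1(1-x)^k e^{-(1-x)t}\hat{f}(x)p_n^{(\alpha,\beta)}(x)\,d\mu_{\alpha,\beta}(x)$. Then, by Parseval in $n$ and Fubini,
\[
\|g_k^{(\alpha,\beta)}(f)\|_{\ell^2(\mathbb{N})}^2=\int_{-1}^1|\hat{f}(x)|^2\left(\int_0^\infty t^{2k-1}(1-x)^{2k}e^{-2(1-x)t}\,dt\right)d\mu_{\alpha,\beta}(x)=\frac{\Gamma(2k)}{4^k}\|f\|_{\ell^2(\mathbb{N})}^2,
\]
since the inner integral equals the constant $\Gamma(2k)/4^k$ for every $x\in(-1,1)$, and the point $x=1$ (the bottom of the spectrum) carries no $\mu_{\alpha,\beta}$-mass. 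This yields $\|g_k^{(\alpha,\beta)}(f)\|_{\ell^2(\mathbb{N})}=c_k\|f\|_{\ell^2(\mathbb{N})}$ with $c_k=(\Gamma(2k)/4^k)^{1/2}$.

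For the upper bound in \eqref{eq:bound-gk} I would regard $g_k^{(\alpha,\beta)}$ as the norm of a vector-valued operator. Setting $\mathbb{B}=L^2((0,\infty),t^{2k-1}\,dt)$ and $Gf(n)=\{\partial_t^k W_t^{(\alpha,\beta)}f(n)\}_{t>0}$, we have $g_k^{(\alpha,\beta)}(f)(n)=\|Gf(n)\|_{\mathbb{B}}$ and $Gf(n)=\sum_{m\ge 0}f(m)\,\mathbf{K}(m,n)$ with the $\mathbb{B}$-valued kernel $\mathbf{K}(m,n)=\{\partial_t^k K_t^{(\alpha,\beta)}(m,n)\}_{t>0}$. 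The $\ell^2$ identity already provides the $L^2$-boundedness of $G$, so it remains to verify the size condition $\|\mathbf{K}(m,n)\|_{\mathbb{B}}\le C/|m-n|$ and the Hörmander-type smoothness conditions $\|\mathbf{K}(m,n)-\mathbf{K}(m',n)\|_{\mathbb{B}}+\|\mathbf{K}(n,m)-\mathbf{K}(n,m')\|_{\mathbb{B}}\le C|m-m'|/|m-n|^2$ when $|m-n|>2|m-m'|$. Once these hold, the weighted estimate $\|g_k^{(\alpha,\beta)}(f)\|_{\ell^p(\mathbb{N},w)}\le C\|f\|_{\ell^p(\mathbb{N},w)}$ for every $w\in A_p(\mathbb{N})$ follows from the standard weighted theory of vector-valued Calderón-Zygmund operators on spaces of homogeneous type. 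I expect this kernel analysis to be the main obstacle: it reduces to pointwise bounds and difference bounds for the Jacobi heat kernel $K_t^{(\alpha,\beta)}(m,n)$ and its $t$-derivatives, decaying suitably in $|m-n|$ and $t$; the hypothesis $\alpha,\beta\ge -1/2$ should enter precisely here, and the required heat kernel estimates should be available from \cite{ACL-JacI}.

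Finally, the lower bound comes from duality. Polarizing the spectral computation above gives $\int_0^\infty t^{2k-1}\langle\partial_t^k W_t^{(\alpha,\beta)}f,\partial_t^k W_t^{(\alpha,\beta)}h\rangle_{\ell^2(\mathbb{N})}\,dt=(\Gamma(2k)/4^k)\langle f,h\rangle_{\ell^2(\mathbb{N})}$. Using the $\ell^p(\mathbb{N},w)$--$\ell^{p'}(\mathbb{N},w^{1-p'})$ duality, then Cauchy-Schwarz in $\mathbb{B}$ followed by Hölder,
\[
|\langle f,h\rangle_{\ell^2(\mathbb{N})}|\le\frac{4^k}{\Gamma(2k)}\sum_{n\ge 0}g_k^{(\alpha,\beta)}(f)(n)\,g_k^{(\alpha,\beta)}(h)(n)\le\frac{4^k}{\Gamma(2k)}\|g_k^{(\alpha,\beta)}(f)\|_{\ell^p(\mathbb{N},w)}\|g_k^{(\alpha,\beta)}(h)\|_{\ell^{p'}(\mathbb{N},w^{1-p'})}.
\]
Since $w\in A_p(\mathbb{N})$ is equivalent to $w^{1-p'}\in A_{p'}(\mathbb{N})$, the already proved upper bound applies to $g_k^{(\alpha,\beta)}(h)$ on $\ell^{p'}(\mathbb{N},w^{1-p'})$, so the right-hand side is at most $C\|g_k^{(\alpha,\beta)}(f)\|_{\ell^p(\mathbb{N},w)}\|h\|_{\ell^{p'}(\mathbb{N},w^{1-p'})}$; taking the supremum over $h$ with $\|h\|_{\ell^{p'}(\mathbb{N},w^{1-p'})}\le 1$ gives $C_1\|f\|_{\ell^p(\mathbb{N},w)}\le\|g_k^{(\alpha,\beta)}(f)\|_{\ell^p(\mathbb{N},w)}$. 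The restriction $f\in\ell^2(\mathbb{N})\cap\ell^p(\mathbb{N},w)$ guarantees that all of these $\ell^2$-based manipulations are justified.
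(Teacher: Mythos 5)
Your overall architecture is sound and two of its three pieces coincide with the paper's argument: the exact $\ell^2$ identity $\|g_k^{(\alpha,\beta)}(f)\|_{\ell^2(\mathbb{N})}^2=\frac{\Gamma(2k)}{2^{2k}}\|f\|_{\ell^2(\mathbb{N})}^2$ is computed the same way (Parseval plus Fubini), and your duality derivation of the lower bound from the upper bound via polarization is exactly the paper's (your pairing against $\ell^{p'}(\mathbb{N},w^{1-p'})$ is the same as the paper's choice $h=w^{1/p}f_1$). The genuine gap is in the upper bound, at precisely the point you flag as ``the main obstacle'' and then defer: you assert that the size condition $\|\mathbf{K}(m,n)\|_{\mathbb{B}}\le C/|m-n|$ and the H\"ormander conditions for the $\mathbb{B}$-valued kernel $\{\partial_t^kK_t^{(\alpha,\beta)}(m,n)\}_{t>0}$ ``should be available from'' the heat-semigroup paper, for all $\alpha,\beta\ge-1/2$ and all $k\ge1$. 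They are not established there in that form, and the present paper does not attempt to prove them in that generality; this is the most delicate part of the whole proof and cannot be outsourced to a citation.

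What the paper does instead --- and what your proposal is missing --- are two reductions that make the kernel analysis tractable. First, a weighted transplantation theorem for Jacobi coefficients (together with Krivine's theorem to get a $\mathbb{B}_k$-valued extension) yields the factorization $G_{t,k}^{(\alpha,\beta)}f=\overline{T}^{\alpha,\beta}_{-1/2,-1/2}G_{t,k}^{(-1/2,-1/2)}T^{-1/2,-1/2}_{\alpha,\beta}f$, which reduces everything to the Chebyshev case $(\alpha,\beta)=(-1/2,-1/2)$. Second, an induction on $k$ using the semigroup identity $\partial_t\bigl(W_t(\partial_s^kW_sf)\bigr)=\partial_u^{k+1}W_uf\big|_{u=s+t}$ and the computation $\int_0^r t(r-t)^{2k-1}\,dt=\frac{r^{2k+1}}{2k(2k+1)}$ reduces $g_{k+1}^{(-1/2,-1/2)}$ to the composition of $g_1^{(-1/2,-1/2)}$ with $G_{s,k}^{(-1/2,-1/2)}$. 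Only then, for $(\alpha,\beta)=(-1/2,-1/2)$ and $k=1$, does one verify the Calder\'on--Zygmund conditions, and this is feasible because the kernel is explicit: $W_t^{(-1/2,-1/2)}(m,n)=e^{-t}(I_{m+n}(t)+I_{n-m}(t))$, and Schl\"afli's integral representation of $I_\nu$ (integrated by parts up to three times) gives the bounds $\|\partial_tK_t(n)\|_{\mathbb{B}_1}\le C/n$ and $\|\partial_t(K_t(n+1)-K_t(n))\|_{\mathbb{B}_1}\le C/n^2$. If you want to keep your direct route for general $(\alpha,\beta)$ you must actually prove the vector-valued kernel estimates for the Jacobi heat kernel derivatives, which is a substantial piece of work; otherwise you should adopt the transplantation-plus-induction reduction.
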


To prove this theorem we will start by showing that the second inequality in \eqref{eq:bound-gk} implies the first one. After two appropriate reductions, the former will be deduced from the case $(\alpha,\beta)=(-1/2,-1/2)$ and $k=1$ that we will obtain from  discrete Calder\'on-Zygmund theory.

It is very common to define $g_k$-functions in terms of the Poisson semigroup instead of the heat semigroup. In our case the Poisson semigroup can be defined by subordination through the identity
\begin{equation}
\label{eq:poisson}
P^{(\alpha,\beta)}_tf(n)=\frac{1}{\sqrt{\pi}}\int_{0}^{\infty}\frac{e^{-u}}{\sqrt{u}}W_{t^2/(4u)}^{(\alpha,\beta)}f(n)\, du,\quad t\ge 0,
\end{equation}
and then we have the $\mathfrak{g}_k^{(\alpha,\beta)}$-function
\begin{equation*}
\mathfrak{g}_k^{(\alpha,\beta)}(f)(n)=\left(\int_{0}^{\infty}t^{2k-1}\left|\frac{\partial^k }{\partial t^k}P^{(\alpha,\beta)}_tf(n)\right|^2\, dt\right)^{1/2}, \qquad k\ge 1.
\end{equation*}
The following result will be a consequence of Theorem \ref{th:main}.
\begin{cor}
\label{cor:poisson}
Let $\alpha,\beta\ge -1/2$, $1<p<\infty$, and $w\in A_p(\mathbb{N})$. Then,
\begin{equation*}
C_1 \|f\|_{\ell^p(\mathbb{N},w)}\le \|\mathfrak{g}_k^{(\alpha,\beta)}(f)\|_{\ell^p(\mathbb{N},w)}\le C_2 \|f\|_{\ell^p(\mathbb{N},w)},\qquad f\in\ell^2(\mathbb{N})\cap\ell^{p}(\mathbb{N},w),
\end{equation*}
where $C_1$ and $C_2$ are constants independent of $f$.
\end{cor}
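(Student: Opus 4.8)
The plan is to deduce Corollary \ref{cor:poisson} directly from Theorem \ref{th:main} by exploiting the subordination formula \eqref{eq:poisson}, which expresses the Poisson semigroup $P_t^{(\alpha,\beta)}$ as a superposition of heat semigroup operators $W_{t^2/(4u)}^{(\alpha,\beta)}$. The standard mechanism in Littlewood-Paley-Stein theory is that control of the heat $g_k$-function transfers to control of the Poisson $\mathfrak{g}_k$-function, and vice versa, because the two operators generate the same functional calculus for $-\mathcal{J}^{(\alpha,\beta)}$. Since Theorem \ref{th:main} already gives the full two-sided equivalence of norms for $g_k^{(\alpha,\beta)}$, it suffices to establish a pointwise (or at least $\ell^p(\mathbb{N},w)$-norm) comparison between $\mathfrak{g}_k^{(\alpha,\beta)}(f)$ and a heat-type square function in the same class, and then invoke the theorem.

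The key steps, in order, are as follows. First, I would fix $f\in\ell^2(\mathbb{N})\cap\ell^p(\mathbb{N},w)$ and use the spectral resolution: since $-\mathcal{J}^{(\alpha,\beta)}$ has spectrum $[0,2]$, one writes $W_t^{(\alpha,\beta)}=e^{-t(-\mathcal{J}^{(\alpha,\beta)})}$ and, by subordination, $P_t^{(\alpha,\beta)}=e^{-t\sqrt{-\mathcal{J}^{(\alpha,\beta)}}}$. Second, I would relate the $t$-derivatives: applying $\partial_t^k$ under the integral in \eqref{eq:poisson} and changing variables produces an expression for $\partial_t^k P_t^{(\alpha,\beta)}f$ in terms of heat-kernel time-derivatives, so that $\mathfrak{g}_k^{(\alpha,\beta)}(f)$ is dominated by an integral average of $g_j^{(\alpha,\beta)}$-type quantities. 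The cleanest route is to prove the upper bound $\|\mathfrak{g}_k^{(\alpha,\beta)}(f)\|_{\ell^p(\mathbb{N},w)}\le C\|f\|_{\ell^p(\mathbb{N},w)}$ by Minkowski's integral inequality applied to the subordination formula, reducing matters to the heat upper bound from Theorem \ref{th:main}; and to prove the lower bound $\|f\|_{\ell^p(\mathbb{N},w)}\le C\|\mathfrak{g}_k^{(\alpha,\beta)}(f)\|_{\ell^p(\mathbb{N},w)}$ by the same polarization/duality argument used for the heat case, namely showing that the lower bound follows formally from the upper bound together with a suitable reproducing identity for $f$ in terms of $\partial_t^k P_t^{(\alpha,\beta)}f$. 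Third, I would assemble these two one-sided estimates into the stated equivalence of norms.

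The main obstacle I anticipate is the passage of the $t$-derivatives through the subordination integral and the resulting bookkeeping of scaling in $t$. Because $\partial_t^k$ acts on $W_{t^2/(4u)}^{(\alpha,\beta)}$ through the inner time variable $s=t^2/(4u)$, the chain rule generates a sum of terms of the form $t^{j}\,(\partial_s^j W_s^{(\alpha,\beta)}f)$ with weights depending on $u$, and one must verify that the Gaussian-type factor $e^{-u}u^{-1/2}$ makes all the arising $u$-integrals converge and produces exactly the weight $t^{2k-1}$ needed to match the definition of $g_k^{(\alpha,\beta)}$. Ensuring the interchange of $\partial_t^k$, the $u$-integral, and the square-function $L^2(dt)$-norm is legitimate—so that Minkowski's inequality and Fubini apply—is the delicate analytic point; the kernel estimates and semigroup bounds developed in \cite{ACL-JacI} for $W_t^{(\alpha,\beta)}$ should supply the decay required to justify these manipulations.

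Once the upper bound is in hand, the lower bound is comparatively routine: it follows by the standard duality argument, exactly as the first inequality in \eqref{eq:bound-gk} was shown to follow from the second in the proof of Theorem \ref{th:main}, using that the upper estimate holds for both $\mathfrak{g}_k^{(\alpha,\beta)}$ and for the adjoint-type square function and that $-\mathcal{J}^{(\alpha,\beta)}$ has no kernel on the relevant subspace.
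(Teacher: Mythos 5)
Your proposal is correct and follows essentially the same route as the paper: the upper bound is obtained by differentiating the subordination formula \eqref{eq:poisson} $k$ times via the chain rule and applying Minkowski's integral inequality plus a change of variables to dominate $\mathfrak{g}_k^{(\alpha,\beta)}(f)$ pointwise by a finite sum of heat $g_{k-j}^{(\alpha,\beta)}(f)$ (this is exactly Lemma \ref{lem:gkPoi-gkHeat}), and the lower bound follows from the polarized $\ell^2$ isometry for $\mathfrak{g}_k^{(\alpha,\beta)}$ by the same duality argument used for \eqref{eq:bound-gk}. The only detail worth making explicit is that the reproducing identity you invoke for the lower bound is precisely the Poisson analogue \eqref{eq:gk-L2-Poi} of Lemma \ref{lem:gk-L2}, proved the same way.
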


We will prove this corollary by controlling the $\mathfrak{g}_k^{(\alpha,\beta)}$-function by a finite sum of $g_k^{(\alpha,\beta)}$-functions (see Lemma \ref{lem:gkPoi-gkHeat}). This fact will follow from the subordination identity \eqref{eq:poisson}.

As an application of Theorem \ref{th:main}, we will prove the boundedness of some multipliers of Laplace type for the discrete Fourier-Jacobi series. As it is well known, for each function $f\in L^2([-1,1],d\mu_{\alpha,\beta})$ its Fourier-Jacobi coefficients are given by
\[
c_m^{(\alpha,\beta)}(f)=\int_{-1}^{1}f(x)p_m^{(\alpha,\beta)}(x)\,d\mu_{\alpha,\beta}(x)
\]
and
\[
f(x)=\sum_{m=0}^{\infty}c_m^{(\alpha,\beta)}(f)p_m^{(\alpha,\beta)}(x),
\]
where the equality holds in $ L^2([-1,1],d\mu_{\alpha,\beta})$. Moreover, $\{c_m^{(\alpha,\beta)}(f)\}_{m\ge 0}$ is a sequence in $\ell^{2}(\mathbb{N})$. Conversely, for each sequence $f\in \ell^2(\mathbb{N})$, the function
\begin{equation}
\label{eq:Jac-transform}
F_{\alpha,\beta}(x)=\sum_{m=0}^{\infty}f(m)p_m^{(\alpha,\beta)}(x)
\end{equation}
belongs to $L^2([-1,1],d\mu_{\alpha,\beta})$ and the Parseval's identity
\begin{equation}
\label{ec:Plancherel}
\|f\|_{\ell^2(\mathbb{N})}=\|F_{\alpha,\beta}\|_{L^2([-1,1],d\mu_{\alpha,\beta})}
\end{equation}
holds. Moreover, $c_m^{(\alpha,\beta)}(F_{\alpha,\beta})=f(m)$.

Note that an obvious consequence of \eqref{ec:Plancherel} is the useful relation
\begin{equation}
\label{eq:parseval}
\sum_{m=0}^{\infty} f(m)g(m)=\int_{-1}^{1}F_{\alpha,\beta}(x)G_{\alpha,\beta}(x)\, d\mu_{\alpha,\beta}(x),\qquad f,g\in \ell^2(\mathbb{N}),
\end{equation}
where $F_{\alpha,\beta}$ is given by \eqref{eq:Jac-transform} and $G_{\alpha,\beta}$ is defined in a similar way.

Given a bounded function $M$ defined on $[0,2]$, the multiplier associated with $M$ is the operator, initially defined on $\ell^{2}(\mathbb{N})$, by the identity
\[
T_Mf(n)=c_n^{(\alpha,\beta)}(M(1-\cdot)F_{\alpha,\beta}).
\]
We say that $T_M$ is a Laplace type multiplier when
\[
M(x)=x \int_{0}^{\infty}e^{-xt}a(t)\, dt,
\]
with $a$ being a bounded function. From a spectral point of view, $T_M=M(\mathcal{J}^{(\alpha,\beta)})$.

The Laplace type multipliers were introduced by Stein in \cite[Ch. 2]{Stein-Rojo}. There, it is observed that they verify $|x^k M^{(k)}(x)|\le C_k$ for $k=0,1,\dots$, and then form a subclass of Marcinkiewicz multipliers. For the operators $T_M$ we have the following result.
\begin{thm}
\label{th:laplace-multi}
  Let $\alpha,\beta\ge -1/2$, $1<p<\infty$, and $w\in A_p(\mathbb{N})$. Then,
\[
\|T_Mf\|_{\ell^p(\mathbb{N},w)}\le C\|f\|_{\ell^p(\mathbb{N},w)},\qquad  f\in \ell^2(\mathbb{N})\cap\ell^{p}(\mathbb{N},w),
\end{equation*}
where $C$ is a constant independent of $f$.
\end{thm}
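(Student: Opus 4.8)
The plan is to deduce Theorem~\ref{th:laplace-multi} from Theorem~\ref{th:main} by dominating $T_M$ pointwise by the $g_k^{(\alpha,\beta)}$-functions. The starting observation is that both $T_M$ and the heat semigroup are functions of $\mathcal{J}^{(\alpha,\beta)}$ and hence commute: by its very definition $T_M$ acts on the Fourier--Jacobi transform side as multiplication by $M(1-x)$, while $W_t^{(\alpha,\beta)}=e^{t\mathcal{J}^{(\alpha,\beta)}}$ acts as multiplication by $e^{-(1-x)t}$. Writing $\lambda=1-x\in[0,2]$ for the spectral variable and using the Laplace representation $M(\lambda)=\lambda\int_0^\infty e^{-\lambda s}a(s)\,ds$, a direct computation on the transform side gives the key identity
\begin{equation}
\label{eq:TM-rep}
\partial_t W_t^{(\alpha,\beta)}T_Mf(n)=-\int_t^\infty a(u-t)\,\partial_u^2 W_u^{(\alpha,\beta)}f(n)\,du,\qquad t>0,
\end{equation}
which follows by checking that both sides have Fourier--Jacobi transform equal to $-\lambda^2\big(\int_0^\infty a(s)e^{-\lambda(t+s)}\,ds\big)F_{\alpha,\beta}(x)$ (for the right-hand side one substitutes $u=s+t$). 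I would justify the differentiation and the interchange of integrals through the isometry \eqref{ec:Plancherel} and Parseval's identity \eqref{eq:parseval}, together with the boundedness of $a$ and the uniform decay of $\lambda^2e^{-\lambda u}$ on $[0,2]$, valid for $f\in\ell^2(\mathbb{N})$.

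From \eqref{eq:TM-rep} and $|a|\le\|a\|_\infty$ we obtain, recalling the definition \eqref{eq:def-gk} with $k=1$,
\[
g_1^{(\alpha,\beta)}(T_Mf)(n)^2\le\|a\|_\infty^2\int_0^\infty t\left(\int_t^\infty|\partial_u^2 W_u^{(\alpha,\beta)}f(n)|\,du\right)^2dt.
\]
The computational heart of the argument is then the dual Hardy inequality
\[
\int_0^\infty t\left(\int_t^\infty h(u)\,du\right)^2dt\le\int_0^\infty u^3 h(u)^2\,du,\qquad h\ge 0,
\]
which one proves by applying the Cauchy--Schwarz inequality to $\int_t^\infty u^{-1}\cdot u\,h(u)\,du$ and interchanging the order of integration. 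Taking $h(u)=|\partial_u^2 W_u^{(\alpha,\beta)}f(n)|$ and comparing with \eqref{eq:def-gk} for $k=2$ yields the pointwise domination
\begin{equation}
\label{eq:gk-domination}
g_1^{(\alpha,\beta)}(T_Mf)(n)\le\|a\|_\infty\,g_2^{(\alpha,\beta)}(f)(n),\qquad n\in\mathbb{N}.
\end{equation}

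With \eqref{eq:gk-domination} in hand, the theorem follows by chaining the two halves of Theorem~\ref{th:main}: the lower bound in \eqref{eq:bound-gk} for $k=1$ applied to $T_Mf$, the estimate \eqref{eq:gk-domination}, and the upper bound in \eqref{eq:bound-gk} for $k=2$ give
\[
C_1\|T_Mf\|_{\ell^p(\mathbb{N},w)}\le\|g_1^{(\alpha,\beta)}(T_Mf)\|_{\ell^p(\mathbb{N},w)}\le\|a\|_\infty\|g_2^{(\alpha,\beta)}(f)\|_{\ell^p(\mathbb{N},w)}\le\|a\|_\infty C_2\|f\|_{\ell^p(\mathbb{N},w)}.
\]
The main obstacle I anticipate is one of rigour rather than of strategy: to invoke the lower bound of Theorem~\ref{th:main} for $T_Mf$ one needs $T_Mf\in\ell^2(\mathbb{N})\cap\ell^p(\mathbb{N},w)$, whereas the membership in $\ell^p(\mathbb{N},w)$ is part of what is being proved. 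Since $M$ is bounded, $T_M$ is bounded on $\ell^2(\mathbb{N})$ by Parseval, so $T_Mf\in\ell^2(\mathbb{N})$; the $\ell^p(\mathbb{N},w)$ membership I would secure by first establishing the estimate for $f$ in the dense class of finitely supported sequences and passing to the limit, or, equivalently, by running the lower bound in its dual polarized form so that the chain above produces the $\ell^p(\mathbb{N},w)$ bound directly. The remaining technical points—justifying \eqref{eq:TM-rep} and the use of Fubini's theorem—reduce to routine estimates once the spectral picture is in place.
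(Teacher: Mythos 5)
Your proposal is correct and follows essentially the same route as the paper: the representation \eqref{eq:TM-rep} is the paper's formula $\partial_t W_t^{(\alpha,\beta)}(T_Mf)=-\int_0^\infty a(s)\,\partial_s^2W_{s+t}^{(\alpha,\beta)}f\,ds$ after the substitution $u=s+t$, your ``dual Hardy inequality'' is exactly the paper's Cauchy--Schwarz-plus-Fubini step, and the final chaining through the two halves of Theorem \ref{th:main} is identical. The rigour issue you flag is resolved just as you suggest: the reverse inequality \eqref{eq:gk-reverse} is obtained by duality/polarization for $\ell^2$ sequences, so it applies to $T_Mf\in\ell^2(\mathbb{N})$ without assuming $T_Mf\in\ell^p(\mathbb{N},w)$ a priori.
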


From the identity,
\[
x^{i\gamma}=\frac{x}{\Gamma(1-i\gamma)}\int_{0}^{\infty}e^{-xt}t^{-i\gamma},\qquad \gamma \in \mathbb{R},
\]
we deduce the following corollary.
\begin{cor}
  Let $\alpha,\beta\ge -1/2$, $1<p<\infty$, and $w\in A_p(\mathbb{N})$. Then, 
\[
\|(\mathcal{J}^{(\alpha,\beta)})^{i\gamma}f\|_{\ell^p(\mathbb{N},w)}\le C\|f\|_{\ell^p(\mathbb{N},w)}, \qquad  f\in \ell^2(\mathbb{N})\cap\ell^{p}(\mathbb{N},w),
\end{equation*}
where $C$ is a constant independent of $f$.
\end{cor}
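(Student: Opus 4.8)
The plan is to recognize $(\mathcal{J}^{(\alpha,\beta)})^{i\gamma}$ as a multiplier of Laplace type and then to invoke Theorem~\ref{th:laplace-multi} directly. Recall that $-\mathcal{J}^{(\alpha,\beta)}$ is the non-negative operator with spectrum $[0,2]$, and that, from the spectral identification $T_M=M(\mathcal{J}^{(\alpha,\beta)})$, the imaginary power $(\mathcal{J}^{(\alpha,\beta)})^{i\gamma}$ is precisely the multiplier operator $T_M$ attached to the function $M(x)=x^{i\gamma}$ on $[0,2]$. Hence the whole task reduces to exhibiting $M(x)=x^{i\gamma}$ as a Laplace type multiplier, that is, to producing a bounded function $a$ with $M(x)=x\int_0^\infty e^{-xt}a(t)\,dt$.

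First I would read off the candidate density $a$ from the quoted identity
\[
x^{i\gamma}=\frac{x}{\Gamma(1-i\gamma)}\int_{0}^{\infty}e^{-xt}t^{-i\gamma}\,dt,\qquad \gamma\in\mathbb{R},
\]
namely $a(t)=t^{-i\gamma}/\Gamma(1-i\gamma)$. To justify the identity itself I would appeal to the standard Gamma integral $\int_{0}^{\infty}e^{-xt}t^{s-1}\,dt=\Gamma(s)x^{-s}$, valid for $x>0$ and $\operatorname{Re}s>0$, taken with $s=1-i\gamma$; here $\operatorname{Re}s=1>0$ secures absolute convergence at $t=0$ (where $|t^{-i\gamma}|=1$), while the factor $e^{-xt}$ handles the behaviour at $t=\infty$, and substituting $\Gamma(1-i\gamma)x^{i\gamma-1}$ for the integral recovers $x^{i\gamma}$ after multiplication by $x/\Gamma(1-i\gamma)$.

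Next I would verify the sole hypothesis of the Laplace type class, namely that $a$ be bounded. This is immediate, since $|t^{-i\gamma}|=|e^{-i\gamma\log t}|=1$ for every $t>0$, whence $\|a\|_{\infty}=1/|\Gamma(1-i\gamma)|<\infty$. With $M$ now realized as a Laplace type multiplier, Theorem~\ref{th:laplace-multi} applies verbatim and delivers $\|T_Mf\|_{\ell^p(\mathbb{N},w)}\le C\|f\|_{\ell^p(\mathbb{N},w)}$ for all $f\in\ell^2(\mathbb{N})\cap\ell^p(\mathbb{N},w)$, $1<p<\infty$ and $w\in A_p(\mathbb{N})$, which is exactly the claim of the corollary.

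I do not anticipate any genuine obstacle: the entire content is the observation that $x^{i\gamma}$ fits the Laplace template with a unimodular, hence bounded, density. The only point meriting a word of care is that $M(x)=x^{i\gamma}$ is bounded on $(0,2]$ but fails to be continuous at the spectral endpoint $x=0$ of $-\mathcal{J}^{(\alpha,\beta)}$; since this boundary point carries no mass in the spectral resolution and the Laplace type class is designed precisely to tolerate such oscillatory behaviour near the origin, it poses no difficulty, and no genuinely hard step remains once Theorem~\ref{th:laplace-multi} is in force.
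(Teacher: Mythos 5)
Your proof is correct and follows exactly the paper's route: the authors likewise obtain the corollary by reading off the bounded density $a(t)=t^{-i\gamma}/\Gamma(1-i\gamma)$ from the stated Gamma-integral identity and applying Theorem~\ref{th:laplace-multi}. Your write-up simply supplies the (routine) verification that the paper leaves implicit.
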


The rest of the paper is organized as follows. Section \ref{sec:Proof-Main} contains the proof of Theorem \ref{th:main} which relies on a transplantation theorem and the Calder\'on-Zygmund theory. The proofs of two propositions that are necessary to apply the Calder\'on-Zygmund theory are provided in Section \ref{sec:size-smooth}. Section \ref{sec:poisson} and Section \ref{sec:Proof-Laplace} contain the proofs of Corollary \ref{cor:poisson} and Theorem \ref{th:laplace-multi}, respectively.

\section{Proof of Theorem \ref{th:main}}
\label{sec:Proof-Main}
We consider the Banach space $\mathbb{B}_k=L^2\left((0,\infty), t^{2k-1}\, dt\right)$, with $k\ge 1$, and the operator
\[
G^{(\alpha,\beta)}_{t,k}f(n)=\sum_{m= 0}^{\infty} f(m)G^{(\alpha,\beta)}_{t,k}(m,n),
\]
with
\begin{align*}
G^{(\alpha,\beta)}_{t,k}(m,n)&=\frac{\partial^k}{\partial t^k} K^{(\alpha,\beta)}_t(m,n)\\&=(-1)^k\int_{-1}^{1}(1-x)^k e^{-t(1-x)}p_m^{(\alpha,\beta)}(x)p_n^{(\alpha,\beta)}(x)\, d\mu_{\alpha,\beta}(x).
\end{align*}
Then, it is clear that
\[
g^{(\alpha,\beta)}_k(f)(n)=\left\|G^{(\alpha,\beta)}_{t,k}f(n)\right \|_{\mathbb{B}_k}.
\]

A first tool to prove Theorem \ref{th:main} is the following result about the $\ell^2$-boundedness of the $g_k^{(\alpha,\beta)}$-functions.

\begin{lem}
\label{lem:gk-L2}
  Let $\alpha,\beta\ge -1/2$ and $k\ge 1$. Then, 
\begin{equation}
\label{eq:gk-L2}
\|g_k^{(\alpha,\beta)}(f)\|^2_{\ell^2(\mathbb{N})}=\frac{\Gamma(2k)}{2^{2k}}\|f\|^2_{\ell^2(\mathbb{N})}.
\end{equation}
\end{lem}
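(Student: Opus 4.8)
The plan is to diagonalize the heat semigroup through the Jacobi transform \eqref{eq:Jac-transform} and reduce the identity to an elementary $\Gamma$-integral. Given $f\in\ell^2(\mathbb{N})$, let $F_{\alpha,\beta}$ be the function in \eqref{eq:Jac-transform}. Inserting the definition of $K_t^{(\alpha,\beta)}(m,n)$ into $W_t^{(\alpha,\beta)}f(n)$ and interchanging the sum in $m$ with the integral, I would first write
\[
W_t^{(\alpha,\beta)}f(n)=\int_{-1}^{1}e^{-(1-x)t}F_{\alpha,\beta}(x)p_n^{(\alpha,\beta)}(x)\,d\mu_{\alpha,\beta}(x),
\]
so that $W_t^{(\alpha,\beta)}f(n)$ is precisely the $n$-th Fourier–Jacobi coefficient $c_n^{(\alpha,\beta)}\big(e^{-(1-\cdot)t}F_{\alpha,\beta}\big)$. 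Differentiating $k$ times under the integral sign in $t$ then gives
\[
\frac{\partial^k}{\partial t^k}W_t^{(\alpha,\beta)}f(n)=c_n^{(\alpha,\beta)}\big((-(1-\cdot))^ke^{-(1-\cdot)t}F_{\alpha,\beta}\big),
\]
i.e. for each fixed $t>0$ the sequence $\{\partial_t^k W_t^{(\alpha,\beta)}f(n)\}_{n\ge 0}$ is the Fourier–Jacobi coefficient sequence of the function $g_t(x)=(-(1-x))^ke^{-(1-x)t}F_{\alpha,\beta}(x)$, which lies in $L^2([-1,1],d\mu_{\alpha,\beta})$ because $|(1-x)^k e^{-(1-x)t}|\le 2^k$ on $[-1,1]$.

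Next I would apply Parseval's identity \eqref{ec:Plancherel} fiberwise in $t$: for each fixed $t$,
\[
\sum_{n=0}^{\infty}\left|\frac{\partial^k}{\partial t^k}W_t^{(\alpha,\beta)}f(n)\right|^2=\int_{-1}^{1}(1-x)^{2k}e^{-2(1-x)t}|F_{\alpha,\beta}(x)|^2\,d\mu_{\alpha,\beta}(x).
\]
Multiplying by $t^{2k-1}$, integrating in $t$ over $(0,\infty)$, and swapping the order of the $t$- and $x$-integrations (legitimate by Tonelli since the integrand is nonnegative), the inner integral in $t$ becomes
\[
\int_{0}^{\infty}t^{2k-1}e^{-2(1-x)t}\,dt=\frac{\Gamma(2k)}{(2(1-x))^{2k}}=\frac{\Gamma(2k)}{2^{2k}(1-x)^{2k}}.
\]
The factor $(1-x)^{2k}$ then cancels exactly against the one coming from $g_t$, leaving
\[
\|g_k^{(\alpha,\beta)}(f)\|^2_{\ell^2(\mathbb{N})}=\frac{\Gamma(2k)}{2^{2k}}\int_{-1}^{1}|F_{\alpha,\beta}(x)|^2\,d\mu_{\alpha,\beta}(x)=\frac{\Gamma(2k)}{2^{2k}}\|f\|^2_{\ell^2(\mathbb{N})},
\]
where the last equality is again \eqref{ec:Plancherel}.

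The computation itself is short, so the only real work lies in justifying the formal manipulations. The interchange of $\sum_n$ and $\int_0^\infty t^{2k-1}\,dt$ with $\int_{-1}^1 d\mu_{\alpha,\beta}$ is harmless because all integrands are nonnegative and Tonelli applies. The two points requiring a little care are the interchange of the sum over $m$ with the $x$-integral that produces $F_{\alpha,\beta}$, and the differentiation $k$ times under the integral sign; both follow from dominated convergence on the compact interval $[-1,1]$, where the exponential $e^{-(1-x)t}$ and the polynomial factors $(1-x)^j$ are uniformly bounded and $F_{\alpha,\beta}\in L^2(d\mu_{\alpha,\beta})$ supplies an integrable dominating function. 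I expect this routine justification, rather than any genuine difficulty, to be the only obstacle; note in particular that the hypothesis $\alpha,\beta\ge -1/2$ plays no essential role in this lemma beyond fixing the standing setup, as the argument works for all $\alpha,\beta>-1$.
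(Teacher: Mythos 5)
Your proposal is correct and follows essentially the same route as the paper: identify $\partial_t^k W_t^{(\alpha,\beta)}f(n)$ with the Fourier--Jacobi coefficient $(-1)^k c_n^{(\alpha,\beta)}\bigl((1-\cdot)^k e^{-t(1-\cdot)}F_{\alpha,\beta}\bigr)$, apply Parseval fiberwise in $t$, interchange the integrals by Tonelli, and evaluate the resulting $\Gamma$-integral so that the factor $(1-x)^{2k}$ cancels. The only differences are presentational (the paper packages the $t$-derivative into the kernel $G_{t,k}^{(\alpha,\beta)}$ from the outset), and your closing remark that the identity holds for all $\alpha,\beta>-1$ is consistent with the paper's computation.
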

\begin{proof}
For a sequence $f\in \ell^{2}(\mathbb{N})$, it is satisfied that
\begin{align*}
G_{t,k}^{(\alpha,\beta)} f(n)&=(-1)^k\int_{-1}^{1}(1-x)^k e^{-t(1-x)}F_{\alpha,\beta}(x)p_n^{(\alpha,\beta)}(x)\, d\mu_{\alpha,\beta}(x)
\\&=(-1)^k c_n^{(\alpha,\beta)}((1-\cdot)^k e^{-t(1-\cdot)}F_{\alpha,\beta}).
\end{align*}
Then, by using \eqref{ec:Plancherel}, we have
\begin{align*}
\|g_k^{(\alpha,\beta)}(f)\|^2_{\ell^2(\mathbb{N})}&=\sum_{n=0}^{\infty}\int_{0}^{\infty}t^{2k-1} (c_n^{(\alpha,\beta)}((1-\cdot)^k e^{-t(1-\cdot)}F_{\alpha,\beta}))^2\, dt
\\& =\int_{0}^{\infty}t^{2k-1}\sum_{n=0}^{\infty}(c_n^{(\alpha,\beta)}((1-\cdot)^k e^{-t(1-\cdot)}F_{\alpha,\beta}))^2\, dt
\\&=\int_{0}^{\infty}t^{2k-1}\int_{-1}^{1}(1-x)^{2k} e^{-2t(1-x)}(F_{\alpha,\beta}(x))^2\, d\mu_{\alpha,\beta}(x)\, dt\\&=\int_{-1}^{1}(1-x)^{2k}(F_{\alpha,\beta}(x))^2\int_{0}^{\infty}t^{2k-1} e^{-2t(1-x)}\, dt\, d\mu_{\alpha,\beta}(x)\\&=\frac{\Gamma(2k)}{2^{2k}}\int_{-1}^{1}(F_{\alpha,\beta}(x))^2d\mu_{\alpha,\beta}(x)=
\frac{\Gamma(2k)}{2^{2k}}\|f\|^2_{\ell^2(\mathbb{N})}
\end{align*}
and the proof is completed.
\end{proof}

Now, let us see that
\begin{equation}
\label{eq:gk-direct}
\|g_k^{(\alpha,\beta)}(f)\|_{\ell^p(\mathbb{N},w)}\le C \|f\|_{\ell^p(\mathbb{N},w)}
\end{equation}
implies the reverse inequality
\begin{equation}
\label{eq:gk-reverse}
\|f\|_{\ell^p(\mathbb{N},w)}\le C \|g_k^{(\alpha,\beta)}(f)\|_{\ell^p(\mathbb{N},w)}.
\end{equation}
Polarising the identity \eqref{eq:gk-L2}, we have
\[
\sum_{n=0}^{\infty}f(n)h(n)=\frac{2^{2k}}{\Gamma(2k)}\sum_{n=0}^{\infty} \int_{0}^{\infty}t^{2k-1}\left(\frac{\partial^k}{\partial t^k}W_t^{(\alpha,\beta)}f(n)\right)\left(\frac{\partial^k}{\partial t^k}W_t^{(\alpha,\beta)}h(n)\right)\, dt
\]
and, obviously,
\[
\left|\sum_{n=0}^{\infty}f(n)h(n)\right|\le C \sum_{n=0}^{\infty}g_k^{(\alpha,\beta)}(f)(n)g_k^{(\alpha,\beta)}(h)(n).
\]
Taking $h(n)=w^{1/p}(n)f_1(n)$, we have
\begin{align*}
\left|\sum_{n=0}^{\infty}f(n)w^{1/p}(n)f_1(n)\right|&\le C
\sum_{n=0}^{\infty}g_k^{(\alpha,\beta)}(f)(n)g_k^{(\alpha,\beta)}(w^{1/p}f_1)(n)
\\&=C\sum_{n=0}^{\infty}g_k^{(\alpha,\beta)}(f)(n)w^{1/p}(n)w^{-1/p}(n)g_k^{(\alpha,\beta)}(w^{1/p}f_1)(n)\\&\le
C\|g_k^{(\alpha,\beta)}(f)\|_{\ell^p(\mathbb{N},w)}\|g_k^{(\alpha,\beta)}(w^{1/p}f_1)\|_{\ell^{p'}(\mathbb{N},w')},
\end{align*}
where $w'=w^{-1/(p-1)}$ and $p'$ is the conjugate exponent of $p$; i.e., $1/p+1/p'=1$. Note that $w\in A_p(\mathbb{N})$ implies $w'\in A_{p'}(\mathbb{N})$ and, by \eqref{eq:gk-direct},
\[
\|g_k^{(\alpha,\beta)}(w^{1/p}f_1)\|_{\ell^{p'}(\mathbb{N},w')}\le C \|w^{1/p}f_1\|_{\ell^{p'}(\mathbb{N},w')}=\|f_1\|_{\ell^{p'}(\mathbb{N})}.
\]
So, we obtain that
\[
\left|\sum_{n=0}^{\infty}f(n)w^{1/p}(n)f_1(n)\right|\le C \|g^{(\alpha,\beta)}_k(f)\|_{\ell^p(\mathbb{N},w)}\|f_1\|_{\ell^{p'}(\mathbb{N})}
\]
and taking the supremum over all $f_1\in \ell^{p'}(\mathbb{N})$ such that $\|f_1\|_{\ell^{p'}(\mathbb{N})}\le 1$, we conclude the inequality \eqref{eq:gk-reverse}.

In this way, we have reduced the proof of Theorem \ref{th:main} to prove \eqref{eq:gk-direct}. Now, we proceed with two new reductions. First, we are going to use a proper transplantation operator to deduce \eqref{eq:gk-direct} from the case $(\alpha,\beta)=(-1/2,-1/2)$ for $k\ge 1$. Finally, we will see how to obtain \eqref{eq:gk-direct} for $g_k^{(-1/2,-1/2)}$ with $k> 1$ from the case $k=1$. These reductions in the proof are inspired by the work in \cite{G-T-et-al}.

For $f\in \ell^2(\mathbb{N})$ we define the transplantation operator
\[
T_{\alpha,\beta}^{\gamma,\delta}f(n)=\sum_{m=0}^{\infty} f(m)K_{\alpha,\beta}^{\gamma,\delta}(n,m)
\]
where
\[
K_{\alpha,\beta}^{\gamma,\delta}(n,m)=\int_{-1}^{1}p_n^{(\gamma,\delta)}(x)p_m^{(\alpha,\beta)}(x)\, \mu_{\gamma/2+\alpha/2,\delta/2+\beta/2}(x).
\]
This operator was analysed in \cite{ACL-Trans}, where an extension of a classical result from R.~Askey \cite{Askey-Jacobi} was given. In fact, it was proved that
\[
\|T_{\alpha,\beta}^{\gamma,\delta}f\|_{\ell^p(\mathbb{N}, w)}\le C \|f\|_{\ell^p(\mathbb{N}, w)}, \qquad 1<p<\infty,
\]
with weights $w\in A_p(\mathbb{N})$, and the analogous weak inequality from $\ell^1(\mathbb{N},w)$ into $\ell^{1,\infty}(\mathbb{N}, w)$ for weights in the $A_1(\mathbb{N})$ class. By a result due to Krivine (see \cite[Theorem 1.f.14]{L-Z}), it is possible to give, in an obvious way, a vector-valued extension of the transplantation operator to the space $\mathbb{B}_k$, denoted by $\overline{T}_{\alpha,\beta}^{\gamma,\delta}$, satisfying
\[
\|\overline{T}_{\alpha,\beta}^{\gamma,\delta}f\|_{\ell_{\mathbb{B}_k}^p(\mathbb{N}, w)}\le C \|f\|_{\ell_{\mathbb{B}_k}^p(\mathbb{N}, w)}, \qquad 1<p<\infty,
\]
with weights in $A_p(\mathbb{N})$.

In this way, we have
\begin{equation}
\label{eq:compo-gk}
G_{t,k}^{(\alpha,\beta)}f=\overline{T}^{\alpha,\beta}_{-1/2,-1/2}G_{t,k}^{(-1/2,-1/2)}{T}^{-1/2,-1/2}_{\alpha,\beta}f.
\end{equation}
Indeed, we have
\begin{align*}
G_{t,k}^{(-1/2,-1/2)}{T}^{-1/2,-1/2}_{\alpha,\beta}f(n)
&=\frac{\partial^k}{\partial t^k}W_t^{(-1/2,-1/2)}{T}^{-1/2,-1/2}_{\alpha,\beta}f(n)
\\&=
\sum_{m=0}^{\infty}f(m)\sum_{j=0}^{\infty}G_{t,k}^{(-1/2,-1/2)}(j,n)K^{-1/2,-1/2}_{\alpha,\beta}(j,m)
\end{align*}
and, by using \eqref{eq:parseval} and the identities
\[
G_{t,k}^{(-1/2,-1/2)}(j,n)=(-1)^k c_j^{(-1/2,-1/2)}((1-\cdot)^k e^{-t(1-\cdot)}p_n^{(-1/2,-1/2)})
\]
and
\[
K^{-1/2,-1/2}_{\alpha,\beta}(j,m)=c_j^{(-1/2,-1/2)}((1-\cdot)^{\alpha/2+1/4}(1+\cdot)^{\beta/2+1/4}p_m^{(\alpha,\beta)}),
\]
we deduce that
\begin{multline*}
\sum_{j=0}^{\infty}G_{t,k}^{(-1/2,-1/2)}(j,n)K^{-1/2,-1/2}_{\alpha,\beta}(j,m)\\=(-1)^k
\int_{-1}^{1}(1-x)^k e^{-t(1-x)}p_n^{(-1/2,-1/2)}(x)p_m^{(\alpha,\beta)}(x)\, d\mu_{\alpha/2-1/4,\beta/2-1/4}(x).
\end{multline*}
Applying a similar argument to the other composition the proof of \eqref{eq:compo-gk} follows.

Now, let us see that it is enough to analyse the $g_1^{(-1/2,-1/2)}$-function. In fact, using induction we can deduce the boundedness of the $g_k^{(-1/2,-1/2)}$-functions for $k>1$. Let us suppose that the operator $G_{t,k}^{(-1/2,-1/2)}$ is bounded from $\ell^p(\mathbb{N},w)$ into $\ell^p_{\mathbb{B}_k}(\mathbb{N},w)$. Taking $k=1$ and applying again Krivine's theorem, we deduce that the operator $\overline{G}^{(-1/2,-1/2)}_{t,1}: \ell^p_{\mathbb{B}_k}(\mathbb{N},w)\longrightarrow \ell^p_{\mathbb{B}_k\times \mathbb{B}_1}(\mathbb{N},w)$, given by
\[
\{f_s(n)\}_{s\ge 0}\longmapsto \{G_{t,1}^{(-1/2,-1/2)}f_s\}_{t,s\ge 0},
\]
is bounded. Moreover, $\overline{G}_{t,1}^{(-1/2,-1/2)}\circ G_{s,k}^{(-1/2,-1/2)}$ is a bounded operator from $\ell^p(\mathbb{N},w)$ into $\ell^p_{\mathbb{B}_k\times \mathbb{B}_1}(\mathbb{N},w)$. Now, using the identity
\[
\frac{\partial}{\partial t}\left(W_t^{(-1/2,-1/2)}\left(\frac{\partial^k}{\partial s^k}W_s^{(-1/2,-1/2)}f\right)\right)=\left.\frac{\partial^{k+1}}{\partial u^{k+1}}W_u^{(-1/2,-1/2)}f\right|_{u=s+t},
\]
we have
\begin{multline*}
\left\|\overline{G}_{t,1}^{(-1/2,-1/2)}\circ G_{s,k}^{(-1/2,-1/2)}f\right\|_{\mathbb{B}_k\times \mathbb{B}_1}^2\\
\begin{aligned}
&=\int_{0}^{\infty}\int_{0}^{\infty}ts^{2k-1}\left|\left.\frac{\partial^{k+1}}{\partial u^{k+1}}W_u^{(-1/2,-1/2)}f\right|_{u=s+t}\right|^2\,ds\, dt\\&=
\int_{0}^{\infty}\int_{t}^{\infty}t(r-t)^{2k-1}\left|\left.\frac{\partial^{k+1}}{\partial u^{k+1}}W_u^{(-1/2,-1/2)}f\right|_{u=r}\right|^2\,dr\, dt
\\&=\int_{0}^{\infty}\left|\frac{\partial^{k+1}}{\partial r^{k+1}}W_r^{(-1/2,-1/2)}f\right|^2\int_{0}^{r}t(r-t)^{2k-1}\,dt\, dr
\\&=\frac{1}{(2k+1)(2k)}\int_{0}^{\infty}r^{2k+1}\left|\frac{\partial^{k+1}}{\partial r^{k+1}}W_r^{(-1/2,-1/2)}f\right|^2\, dr
\\&=\frac{g_{k+1}^{(-1/2,-1/2)}(f)}{(2k+1)(2k)}.
\end{aligned}
\end{multline*}

Finally, to complete the proof of Theorem \ref{th:main} we have to prove \eqref{eq:gk-direct} for $(\alpha,\beta)=(-1/2,-1/2)$ and $k=1$. This fact will be a consequence of the following propositions.
\begin{propo}
\label{propo:g-size}
  Let $n,m\in \mathbb{N}$ with $n\not= m$. Then,
  \begin{equation}
  \label{eq:g-size}
  \|G_{t,1}^{(-1/2,-1/2)}(m,n)\|_{\mathbb{B}_1}\le C |n-m|^{-1}.
  \end{equation}
  Moreover
  \begin{equation}
  \label{eq:g-size-m}
  \|G_{t,1}^{(-1/2,-1/2)}(n,n)\|_{\mathbb{B}_1}\le C.
  \end{equation}
\end{propo}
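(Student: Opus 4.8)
The plan is to specialise everything to the Chebyshev setting $(\alpha,\beta)=(-1/2,-1/2)$, where the kernel becomes completely explicit in terms of modified Bessel functions. Writing $x=\cos\theta$, the measure $d\mu_{-1/2,-1/2}$ becomes $d\theta$ on $(0,\pi)$ and $p_n^{(-1/2,-1/2)}(\cos\theta)=\sqrt{2/\pi}\,\cos(n\theta)$ for $n\ge1$ (with the obvious constant for $n=0$). First I would use the product-to-sum formula together with the classical identity $\frac1\pi\int_0^\pi e^{t\cos\theta}\cos(\ell\theta)\,d\theta=I_\ell(t)$, where $I_\ell$ denotes the modified Bessel function of the first kind, to obtain
\[
K_t^{(-1/2,-1/2)}(m,n)=c_1\,e^{-t}I_{m+n}(t)+c_2\,e^{-t}I_{|m-n|}(t),
\]
with bounded constants $c_1,c_2$ (equal to $1$ when $m,n\ge1$, and adjusted by factors of $\sqrt2$ on the boundary $m=0$ or $n=0$). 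Differentiating in $t$ then exhibits $G_{t,1}^{(-1/2,-1/2)}(m,n)$ as the same combination of $\partial_t\bigl(e^{-t}I_{m+n}(t)\bigr)$ and $\partial_t\bigl(e^{-t}I_{|m-n|}(t)\bigr)$.

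By the triangle inequality in $\mathbb{B}_1$, the whole proposition reduces to the single scalar estimate
\[
\Bigl\|\partial_t\bigl(e^{-t}I_\ell(t)\bigr)\Bigr\|_{\mathbb{B}_1}^2=\int_0^\infty t\,\bigl|\partial_t\bigl(e^{-t}I_\ell(t)\bigr)\bigr|^2\,dt\le \frac{C}{\max(\ell,1)^2},\qquad \ell\ge0.
\]
Indeed, since $|m-n|\le m+n$, the bound with $\ell=m+n$ is dominated by the one with $\ell=|m-n|$, so \eqref{eq:g-size} follows from the case $\ell=|m-n|\ge1$, and \eqref{eq:g-size-m} follows from combining the cases $\ell=2n$ and $\ell=0$.

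To prove the scalar estimate I would work from the representation $\partial_t(e^{-t}I_\ell(t))=-\frac1\pi\int_0^\pi(1-\cos\theta)e^{-t(1-\cos\theta)}\cos(\ell\theta)\,d\theta$ and establish two pointwise bounds on $\psi_\ell(t):=\partial_t(e^{-t}I_\ell(t))$. The first, obtained by simply discarding the oscillating factor $\cos(\ell\theta)$ and substituting $u=1-\cos\theta$, is the $\ell$-free bound $|\psi_\ell(t)|\le C\min(1,t^{-3/2})$. The second, obtained by integrating by parts twice in $\theta$ (the boundary terms vanish because of the factors $\sin(\ell\theta)$ at $\theta\in\{0,\pi\}$ and the factor $\sin\theta$ produced after the first integration by parts), gains two powers of $\ell$ and reads $|\psi_\ell(t)|\le C\ell^{-2}\min(1,t^{-1/2})$ for $\ell\ge1$. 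Splitting the integral at $t=\ell^2$ and using the first bound for $t\ge\ell^2$ and the second for $t<\ell^2$ gives
\[
\int_{\ell^2}^\infty t\,|\psi_\ell|^2\,dt\le C\int_{\ell^2}^\infty t^{-2}\,dt=\frac{C}{\ell^2},\qquad \int_0^{\ell^2} t\,|\psi_\ell|^2\,dt\le\frac{C}{\ell^4}\int_0^{\ell^2}t\min(1,t^{-1})\,dt\le\frac{C}{\ell^2},
\]
which is the desired bound; the case $\ell=0$ needs only the first (uniform) bound, and the diagonal and boundary cases $m=0$ or $n=0$ are covered automatically by the constants $c_1,c_2$.

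The hard part will be the careful justification of the two pointwise bounds — in particular tracking the exact $t$-dependence ($t^{-3/2}$ versus $t^{-1/2}$) after the substitution and after the double integration by parts, and verifying that every boundary term vanishes. It is precisely the interplay between the two regimes, glued at the scale $t\sim\ell^2$, that converts a mere uniform bound (which would only yield \eqref{eq:g-size-m}) into the sharp off-diagonal decay $|n-m|^{-1}$ of \eqref{eq:g-size}; the uniform bound alone controls $t\ge\ell^2$, while the oscillation-driven $\ell^{-2}$ gain is exactly what is needed to tame the intermediate range $1\lesssim t\lesssim\ell^2$ without picking up a spurious logarithm.
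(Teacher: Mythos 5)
Your proof is correct, but the key estimate is obtained by a genuinely different route from the paper's. Both arguments begin identically: expand the Chebyshev kernel as $e^{-t}\bigl(I_{m+n}(t)+I_{|m-n|}(t)\bigr)$ (with harmless $\sqrt{2}$ adjustments on the boundary rows) and reduce \eqref{eq:g-size} and \eqref{eq:g-size-m} to the single scalar bound $\|\partial_t(e^{-t}I_\ell(t))\|_{\mathbb{B}_1}\le C/\max(\ell,1)$. From there the paper uses the recurrence $2I_\ell'=I_{\ell+1}+I_{\ell-1}$ to write $\partial_t(e^{-t}I_\ell(t))$ as a second difference in $\ell$, inserts Schl\"afli's Poisson-type representation \eqref{eq:Schlafli} together with its once- and twice-integrated-by-parts variants so that the second difference collapses into two explicit one-dimensional integrals, and then evaluates the squared $\mathbb{B}_1$-norm exactly (integrating out $t$ first by Fubini) as a beta-type double integral. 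You instead stay with the representation $-\frac{1}{\pi}\int_0^\pi(1-\cos\theta)e^{-t(1-\cos\theta)}\cos(\ell\theta)\,d\theta$, prove the two pointwise bounds $C\min(1,t^{-3/2})$ and $C\ell^{-2}\min(1,t^{-1/2})$ (the latter by two integrations by parts in $\theta$, with the boundary terms vanishing exactly as you indicate), and glue them at $t=\ell^2$; the arithmetic $\int_{\ell^2}^\infty t^{-2}\,dt+\ell^{-4}\int_0^{\ell^2}t\min(1,t^{-1})\,dt\le C\ell^{-2}$ is right and, as you observe, produces no logarithm. Your route is more elementary and self-contained (no Schl\"afli formula, no exact evaluation of the $t$-integral); the paper's route has the advantage that the identical machinery, with one more integration by parts in the Schl\"afli variable, immediately yields the smoothness estimates of Proposition \ref{propo:g-smooth}, whereas your scheme would require an additional summation by parts in $\ell$ (equivalently a third integration by parts in $\theta$) to reach the $|n-m|^{-2}$ decay there. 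The reduction to the scalar estimate and the handling of the diagonal and of the rows $m=0$, $n=0$ coincide in the two arguments.
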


\begin{propo}
\label{propo:g-smooth}
  Let $n,m\in \mathbb{N}$ with $n\not= m$. Then, 
  \begin{equation*}
  \|G_{t,1}^{(-1/2,-1/2)}(m+1,n)-G_{t,1}^{(-1/2,-1/2)}(m,n)\|_{\mathbb{B}_1}\le C |n-m|^{-2}
  \end{equation*}
and
 \begin{equation*}
  \|G_{t,1}^{(-1/2,-1/2)}(m,n+1)-G_{t,1}^{(-1/2,-1/2)}(m,n)\|_{\mathbb{B}_1}\le C |n-m|^{-2}.
  \end{equation*}
\end{propo}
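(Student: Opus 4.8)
The plan is to reduce both estimates to a single one-parameter bound for modified Bessel functions and then to control an explicit oscillatory double integral. First I would specialise the kernel to $(\alpha,\beta)=(-1/2,-1/2)$: writing $x=\cos\theta$ one has $d\mu_{-1/2,-1/2}=d\theta$ and $p_n^{(-1/2,-1/2)}(\cos\theta)=\sqrt{2/\pi}\,\cos(n\theta)$ for $n\ge 1$ (with the obvious constant modification for $n=0$). The product formula $\cos(m\theta)\cos(n\theta)=\tfrac12(\cos((m+n)\theta)+\cos((m-n)\theta))$ together with $\tfrac1\pi\int_0^\pi e^{-t(1-\cos\theta)}\cos(k\theta)\,d\theta=e^{-t}I_k(t)$ gives, for $m,n\ge 1$,
\[
K_t^{(-1/2,-1/2)}(m,n)=e^{-t}\bigl(I_{m+n}(t)+I_{|m-n|}(t)\bigr),
\]
so that $G_{t,1}^{(-1/2,-1/2)}(m,n)=u'_{m+n}(t)+u'_{|m-n|}(t)$ with $u_\ell(t):=e^{-t}I_\ell(t)$. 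The finitely many terms coming from the $n=0$ or $m=0$ normalisation differ only by constants and do not affect the asserted decay in $|n-m|$.

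Second, I would observe that each of the two differences in the statement telescopes into the quantity $R_\ell:=u'_{\ell+1}-u'_\ell$. Indeed $G_{t,1}^{(-1/2,-1/2)}(m+1,n)-G_{t,1}^{(-1/2,-1/2)}(m,n)=R_{m+n}+\bigl(u'_{|m+1-n|}-u'_{|m-n|}\bigr)$, where the first summand has index $m+n\ge|m-n|$ and the second equals $\pm R_{\ell}$ with $\ell=|m-n|$ or $\ell=|m-n|-1$; the estimate for the difference in $n$ follows by the symmetry $G_{t,1}^{(-1/2,-1/2)}(m,n)=G_{t,1}^{(-1/2,-1/2)}(n,m)$. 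Thus the whole proposition reduces to the master estimate $\|R_\ell\|_{\mathbb{B}_1}\le C\ell^{-2}$ for $\ell\ge 1$, the small remaining cases (e.g.\ $|n-m|=1$) being absorbed by the constant bound of Proposition \ref{propo:g-size}.

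Third, I would make the $\mathbb{B}_1=L^2((0,\infty),t\,dt)$ norm explicit. From the integral representation and $\cos((\ell+1)\theta)-\cos(\ell\theta)=-2\sin\tfrac{(2\ell+1)\theta}{2}\sin\tfrac\theta2$ one gets
\[
R_\ell(t)=\frac4\pi\int_0^\pi \sin^3\tfrac\theta2\,e^{-2t\sin^2(\theta/2)}\sin\tfrac{(2\ell+1)\theta}{2}\,d\theta,
\]
and, since the elementary identity $\int_0^\infty t\,e^{-2t(s+r)}\,dt=\tfrac14(s+r)^{-2}$ lets one integrate in $t$ first,
\[
\|R_\ell\|_{\mathbb{B}_1}^2=\frac{4}{\pi^2}\int_0^\pi\!\!\int_0^\pi \frac{\sin^3\frac\theta2\sin^3\frac\phi2}{\bigl(\sin^2\frac\theta2+\sin^2\frac\phi2\bigr)^2}\,\sin\tfrac{(2\ell+1)\theta}{2}\sin\tfrac{(2\ell+1)\phi}{2}\,d\theta\,d\phi.
\]
The task becomes to show that this double integral is $O(\ell^{-4})$; the same computation with the amplitude $\sin^2\frac\theta2\sin^2\frac\phi2(\sin^2\frac\theta2+\sin^2\frac\phi2)^{-2}$ and oscillation $\cos(\ell\theta)\cos(\ell\phi)$ yields the $O(\ell^{-2})$ bound of Proposition \ref{propo:g-size}, so both propositions rest on the same mechanism.

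The hard part will be extracting the gain $\ell^{-4}$ from the oscillations. The natural idea is to integrate by parts twice in $\theta$ and twice in $\phi$, each pair producing a factor $\ell^{-2}$; the obstruction is that the amplitude $\mathcal K(\theta,\phi)=\sin^3\frac\theta2\sin^3\frac\phi2(\sin^2\frac\theta2+\sin^2\frac\phi2)^{-2}$ is homogeneous of degree $2$ near the corner $\theta=\phi=0$, so its fourth mixed derivative is of degree $-2$ and fails to be integrable there. I would therefore split the square $(0,\pi)^2$ into a region near the corner (of size $\sim\ell^{-1}$), a near-diagonal region $|\theta-\phi|\lesssim\theta$, and the complementary off-diagonal region: on the corner region I would estimate directly using the vanishing of $\mathcal K$ and the smallness of the domain, and I would integrate by parts only where $\mathcal K$ and the required number of its derivatives are controlled. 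Carrying out this decomposition, and checking that the boundary terms at $\theta=\pi$ and along the splitting curves are harmless, is the technical core; once it is in place the bound $\|R_\ell\|_{\mathbb{B}_1}\le C\ell^{-2}$ and hence the proposition follow.
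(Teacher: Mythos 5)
Your reduction is sound and, up to notation, coincides with the paper's first step: via $K_t^{(-1/2,-1/2)}(m,n)=e^{-t}\bigl(I_{m+n}(t)+I_{|m-n|}(t)\bigr)$ both differences telescope into $R_\ell=\partial_t\bigl(e^{-t}I_{\ell+1}(t)-e^{-t}I_\ell(t)\bigr)$, and the proposition does follow from the master bound $\|R_\ell\|_{\mathbb{B}_1}\le C\ell^{-2}$. Your explicit formulas for $R_\ell(t)$ and for $\|R_\ell\|_{\mathbb{B}_1}^2$ are correct. The gap is that the one estimate everything rests on --- that the double oscillatory integral is $O(\ell^{-4})$ --- is only planned, and the plan as stated is in real danger of losing a logarithm. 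Two integrations by parts in each variable cost four derivatives of $\mathcal K$, which is homogeneous of degree $2$ near the corner, so $\partial_\theta^2\partial_\phi^2\mathcal K$ has degree $-2$: in two variables this is exactly borderline, and the annuli $2^{-j}\le\theta+\phi\le 2^{-j+1}$ with $2^{-j}\ge\ell^{-1}$ each contribute $O(\ell^{-4})$, summing to $O(\ell^{-4}\log\ell)$ unless further cancellation is used. The same obstruction is visible in the one-variable picture: two integrations by parts in $\theta$ give only $|R_\ell(t)|\lesssim\ell^{-2}\min(1,t^{-1})$, and $\int_1^{\ell^2}t\,(\ell^{-2}t^{-1})^2\,dt$ already produces the logarithm; one needs a third integration by parts (the boundary terms at $\theta=0,\pi$ do still vanish, giving $|R_\ell(t)|\lesssim\ell^{-3}\min(1,t^{-1/2})$) together with a splitting of the $t$-integration at $t\sim 1$ and $t\sim\ell^2$ to close it. You also leave unexamined the boundary terms along the curves separating your three regions, which for a non-product decomposition are not obviously harmless. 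So as written this is a genuine gap, not a routine verification.

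For comparison, the paper avoids oscillatory integrals entirely. Using $2I_\ell'=I_{\ell+1}+I_{\ell-1}$ it writes $R_\ell$ as the fourth-order difference $\tfrac12\bigl(K_t(\ell+2)-3K_t(\ell+1)+3K_t(\ell)-K_t(\ell-1)\bigr)$ and represents the four terms by Schl\"afli's integral $I_\nu(z)=\frac{z^\nu}{\sqrt\pi\,2^\nu\Gamma(\nu+1/2)}\int_{-1}^1e^{-zs}(1-s^2)^{\nu-1/2}\,ds$ integrated by parts $0,1,2,3$ times in $s$, with $\nu=\ell-1,\ell,\ell+1,\ell+2$ respectively, so that all four share the weight $(1-s^2)^{\ell-3/2}$ and collapse into three non-oscillatory integrals $J_{1,t},J_{2,t},J_{3,t}$. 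Their $\mathbb{B}_1$-norms are then computed exactly as Beta-type double integrals and bounded by $C\ell^{-6}$, $C\ell^{-4}$, $C\ell^{-4}$. The cancellation you are trying to extract by stationary phase is thus obtained algebraically from the matching of Schl\"afli representations of consecutive orders. To complete your route you must either carry out the third integration by parts and the $t$-splitting indicated above, or supply the full corner/diagonal decomposition with all boundary terms; until then the proof is incomplete.
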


The proof of these propositions is the most delicate part of the proof of Theorem~\ref{th:main}, so it is postponed to the next section.

Now, using the decomposition
\begin{multline*}
|g_1^{(-1/2,-1/2)}f(n)|\\
\begin{aligned}
&\le \left\|\sum_{\begin{smallmatrix}
                       m=0 \\
                       m\not=n
                     \end{smallmatrix}}^{\infty}f(m)G_{t,1}^{(-1/2,-1/2)}(m,n)\right\|_{\mathbb{B}_1}
                     +\left\|f(n)G_{t,1}^{(-1/2,-1/2)}(n,n)\right\|_{\mathbb{B}_1}
                     \\&:=T_1f(n)+T_2f(n),
\end{aligned}
\end{multline*}
we can apply \eqref{eq:g-size} of Proposition \ref{propo:g-size}, Proposition \ref{propo:g-smooth}, and Lemma \ref{lem:gk-L2} to deduce from the Calder\'on-Zygmund theory the inequality
\[
\|T_1f\|_{\ell^p(\mathbb{N},w)}\le C \|f\|_{\ell^p(\mathbb{N},w)},
\]
and \eqref{eq:g-size-m} to obtain that
\[
\|T_2f\|_{\ell^p(\mathbb{N},w)}\le C \|f\|_{\ell^p(\mathbb{N},w)},
\]
finishing the proof of Theorem \ref{th:main}.

\section{Proof of Proposition \ref{propo:g-size} and Proposition \ref{propo:g-smooth}}
\label{sec:size-smooth}
Denoting by $T_n$ the Chebyshev polynomials, we have
\[
p_n^{(-1/2,-1/2)}(x)=\sqrt{\frac{2}{\pi}}T_n(x)=\sqrt{\frac{2}{\pi}}\cos(n\theta),
\]
for $n\not=0$ and where $x=\cos\theta$, and $p_0^{(-1/2,-1/2)}(x)=\sqrt{\frac{1}{\pi}}T_0(x)=\sqrt{\frac{1}{\pi}}$. Then, the identity (see \cite[p. 456]{PBM1})
\[
\frac{1}{\pi}\int_0^\pi e^{z\cos \theta}\cos(m\theta)\, d\theta=I_m(z), \qquad |\arg(z)|<\pi,
\]
where $I_m$ denotes the Bessel function of imaginary argument of order $m$,
implies
\begin{equation}
\label{eq:kernel-I-1}
W_t^{(-1/2,-1/2)}(m,n)=e^{-t}(I_{m+n}(t)+I_{n-m}(t)), \qquad n,m\not=0,
\end{equation}
\begin{equation}
\label{eq:kernel-I-2}
W_t^{(-1/2,-1/2)}(m,0)=e^{-t}I_m(t), \qquad\text{ and }\qquad  W_t^{(-1/2,-1/2)}(0,n)=e^{-t}I_n(t).
\end{equation}
To simplify notation, we set $K_t(n)=e^{-t}I_n(t)$.

We note that the proofs of Propositions \ref{propo:g-size} and \ref{propo:g-smooth} are similar to the one given in \cite[Proposition 4]{Ciau-et-al} but we have included them  for a self-contained exposition of the paper and to fix some details.

\begin{proof}[Proof of Proposition \ref{propo:g-size}]
The identity \cite[eq. (10.29.1)]{NIST}
\[
2I_m(t)=I_{m+1}(t)+I_{m-1}(t)
\]
yields
\begin{equation}
\label{eq:partial-K}
\frac{\partial K_t(n)}{\partial t}=\frac{1}{2}(K_t(n+1)-2K_t(n)+K_t(n-1)), \qquad n\ge 1,
\end{equation}
and
\begin{equation}
\label{eq:partial-K0}
\frac{\partial K_t(0)}{\partial t}=K_t(1)-K_t(0).
\end{equation}

The next identity is known as Schl\"afli's integral
representation of Poisson type for modified Bessel functions (see \cite[eq. (5.10.22)]{Lebedev}):
\begin{equation}
\label{eq:Schlafli}
I_{\nu}(z) = \frac{z^{\nu}}{\sqrt{\pi}\,2^{\nu}\Gamma(\nu+1/2)}
\int_{-1}^1e^{-zs} (1-s^2)^{\nu-1/2}\,ds, \quad
|\arg z|<\pi, \quad \nu>-\frac12.
\end{equation}
Integrating by parts once and twice in~\eqref{eq:Schlafli}, we have, respectively, the identities
\begin{equation}
\label{eq:SchlafliII}
I_{\nu}(z) = -\frac{z^{\nu-1}}{\sqrt{\pi}\,2^{\nu-1}\Gamma(\nu-1/2)}
\int_{-1}^1e^{-zs} s(1-s^2)^{\nu-3/2}\,ds, \quad \nu>\frac12,
\end{equation}
and
\begin{equation}
\label{eq:SchlafliIII}
I_{\nu}(z)=\frac{z^{\nu-2}}{\sqrt{\pi}\,2^{\nu-2}\Gamma(\nu-3/2)}
\int_{-1}^1e^{-zs} \frac{1+zs}{z}s(1-s^2)^{\nu-5/2}\,ds, \quad \nu>\frac32.
\end{equation}
Then from \eqref{eq:partial-K}, using \eqref{eq:Schlafli}, \eqref{eq:SchlafliII}, and \eqref{eq:SchlafliIII} with $\nu=n-1$, $\nu=n$, and $\nu=n+1$, respectively, we deduce that for $n\ge 1$
\begin{equation*}
\frac{\partial K_t(n)}{\partial t}=\frac{1}{2}\left(I_{1,t}(n)+I_{2,t}(n)\right),
\end{equation*}
where
\[
I_{1,t}(n)=\frac{t^{n-2}}{\sqrt{\pi}2^{n-1}\Gamma(n-1/2)}\int_{-1}^{1}e^{-t(1+s)}s(1-s^2)^{n-3/2}\, ds
\]
and
\[
I_{2,t}(n)= \frac{t^{n-1}}{\sqrt{\pi}2^{n-1}\Gamma(n-1/2)}
\int_{-1}^{1}e^{-t(1+s)}(1+s)^2(1-s^2)^{n-3/2}\, ds.
\]
Now, for $n\ge 2$,
\begin{multline*}
\pi  (\Gamma(n-1/2))^2\|I_{1,t}(n)\|_{\mathbb{B}_1}^2\\
\begin{aligned}
&=\frac{1}{2^{2n-2}}\int_{0}^{\infty}t^{2n-3}\int_{-1}^{1}e^{-t(1+s)}s(1-s^2)^{n-3/2}\,ds \int_{-1}^{1}e^{-t(1+r)}r(1-r^2)^{n-3/2}\, dr\\&=\frac{1}{ 2^{2n-2} }\int_{-1}^{1}\int_{-1}^{1}sr(1-s^2)^{n-3/2}(1-r^2)^{n-3/2}\int_{0}^{\infty}t^{2n-3}e^{-t(2+s+r)}\, dt\, ds\, dr
\\&=\frac{\Gamma(2n-2)}{ 2^{2n-2}}\int_{-1}^{1}\int_{-1}^{1}\frac{sr(1-s^2)^{n-3/2}(1-r^2)^{n-3/2}}{(2-s-r)^{2n-2}}\, ds \, dr
\\&=\Gamma(2n-2)\int_{0}^{1}\int_{0}^{1}\frac{(2u-1)(2v-1)(u(1-u))^{n-3/2}(v(1-v))^{n-3/2}}{(u+v)^{2n-2}}\, du \, dv,
\end{aligned}
\end{multline*}
where in the last step, we have applied the change of variables $s=2u-1$ and $r=2v-1$, and
\begin{align*}
\|I_{1,t}(n)\|_{\mathbb{B}_1}^2&\le C\frac{\Gamma(2n-2)}{(\Gamma(n-1/2))^2}\int_{0}^{1}(v(1-v))^{n-3/2}\int_{0}^{1}\frac{u^{n-3/2}}{(u+v)^{2n-2}}\, du \, dv\\&
=C\frac{\Gamma(2n-2)}{(\Gamma(n-1/2))^2}\int_{0}^{1}(1-v)^{n-3/2}\int_{0}^{1/v}\frac{z^{n-3/2}}{(1+z)^{2n-2}}\, dz \, dv\\&\le
C\frac{\Gamma(2n-2)}{(\Gamma(n-1/2))^2}\left(\int_{0}^{1}(1-v)^{n-3/2}\, dv\right)\left(\int_{0}^{\infty}\frac{z^{n-3/2}}{(1+z)^{2n-2}}\, dz \right)\\&=\frac{C}{(n-1/2)^2}.
\end{align*}
In a similar way and again for $n\ge 2$, we obtain that
\begin{equation*}
\|I_{2,t}(n)\|_{\mathbb{B}_1}^2=
\frac{16\Gamma(2n)}{\pi  (\Gamma(n-1/2))^2}\int_{0}^{1}\int_{0}^{1}\frac{(uv)^{n+1/2}((1-u)(1-v))^{n-3/2}}{(u+v)^{2n}}\, du \, dv
\end{equation*}
and
\begin{align*}
\|I_{2,t}(n)\|_{\mathbb{B}_1}^2&\le C\frac{\Gamma(2n)}{(\Gamma(n-1/2))^2}\left(\int_{0}^{1}v^2(1-v)^{n-3/2}\, dv\right)\left(\int_{0}^{\infty}\frac{z^{n+1/2}}{(1+z)^{2n}}\, dz \right)\\&=\frac{C}{(n+1/2)^2}.
\end{align*}
Hence,
\begin{equation}
\label{eq:general}
\left\|\frac{\partial K_t(n) }{\partial t}\right\|_{\mathbb{B}_1}\le \frac{C}{n}, \qquad \text{for $n\ge 2$.}
\end{equation}

Now, we prove that
\begin{equation}
\label{eq:particular}
\left\|\frac{\partial K_t(0) }{\partial t}\right\|_{\mathbb{B}_1}+\left\|\frac{\partial K_t(1) }{\partial t}\right\|_{\mathbb{B}_1}\le C.
\end{equation}
By Minkowski's integral inequality, it is clear that
\begin{align*}
\left\|\frac{\partial K_t(0) }{\partial t}\right\|_{\mathbb{B}_1}&=\frac{1}{\pi}\left\|\frac{\partial }{\partial t}\int_{0}^{\pi}e^{-t(1-\cos \theta)}\, d\theta\right\|_{\mathbb{B}_1}\\&=\frac{1}{\pi}\left\|\int_{0}^{\pi}e^{-t(1-\cos \theta)}(1-\cos \theta)\, d\theta\right\|_{\mathbb{B}_1}\\&\le
\frac{1}{\pi}\int_{0}^{\pi}(1-\cos \theta)\left\|e^{-t(1-\cos \theta)}\right\|_{\mathbb{B}_1}\le C.
\end{align*}
Similarly, we obtain that $\left\|\frac{\partial K_t(1) }{\partial t}\right\|_{\mathbb{B}_1}\le C$ and the proof of \eqref{eq:particular} is finished.

Finally, using \eqref{eq:kernel-I-1}, \eqref{eq:kernel-I-2}, \eqref{eq:general}, \eqref{eq:particular}, and the identity
\begin{equation}
\label{eq:par-I}
I_{-n}(t)=I_n(t),
\end{equation}
we conclude the proof of the proposition.
\end{proof}

\begin{proof}[Proof of the Proposition \ref{propo:g-smooth}]
By using \eqref{eq:kernel-I-1}, \eqref{eq:kernel-I-2}, and \eqref{eq:par-I}, the proof will follow from the estimate
\begin{equation}
\label{eq:general-smooth}
\left\|\frac{\partial}{\partial t}(K_t(n+1)-K_t(n))\right\|_{\mathbb{B}_1}\le \frac{C}{n^2}, \qquad \text{for $n\not=0$.}
\end{equation}
Using \eqref{eq:partial-K}, we have
\begin{equation*}
\label{eq:partial-K-diff}
\frac{\partial}{\partial t}(K_t(n+1)-K_t(n))=\frac{1}{2}(K_t(n+2)-3K_t(n+1)+3K_t(n)-K_t(n-1)).
\end{equation*}
Integrating by parts three times in \eqref{eq:Schlafli} gives
\begin{align}
\label{eq:SchlafliIV}
I_{\nu}(z) &= -\frac{z^{\nu-3}}{\sqrt{\pi}\,2^{\nu-3}\Gamma(\nu-5/2)} \\*
\notag& \qquad \times \int_{-1}^1 e^{-zs} \,\frac{s(s^2z^2+3sz+3)}{z^2}(1-s^2)^{\nu-7/2}\,ds,
\quad \nu>\frac52.
\end{align}
Then, using \eqref{eq:Schlafli}, \eqref{eq:SchlafliII}, \eqref{eq:SchlafliIII}, and \eqref{eq:SchlafliIV} with $\nu=n-1$, $\nu=n$, $\nu=n+1$, and $\nu=n+2$, respectively, \eqref{eq:partial-K-diff} becomes
\begin{equation*}
\frac{\partial}{\partial t}(K_t(n+1)-K_t(n))=\frac{-1}{2}\left(3J_{1,t}(n)+3J_{2,t}(n)+J_{3,t}(n)\right),
\end{equation*}
where
\[
J_{1,t}(n)=\frac{t^{n-3}}{\sqrt{\pi}2^{n-1}\Gamma(n-1/2)}\int_{-1}^{1}e^{-t(1+s)}s(1-s^2)^{n-3/2}\, ds,
\]
\[
J_{2,t}(n)= \frac{t^{n-2}}{\sqrt{\pi}2^{n-1}\Gamma(n-1/2)}
\int_{-1}^{1}e^{-t(1+s)}s(1+s)(1-s^2)^{n-3/2}\, ds,
\]
and
\[
J_{3,t}(n)= \frac{t^{n-1}}{\sqrt{\pi}2^{n-1}\Gamma(n-1/2)}
\int_{-1}^{1}e^{-t(1+s)}(1+s)^3(1-s^2)^{n-3/2}\, ds.
\]
To estimate these inequalities we proceed as in the previous proposition. In fact, for $n\ge 4$,
\begin{multline*}
\|J_{1,t}(n)\|_{\mathbb{B}_1}^2
=\frac{4\Gamma(2n-4)}{\pi  (\Gamma(n-1/2))^2}\\\times\int_{0}^{1}\int_{0}^{1}\frac{(1-2u)(1-2v)(u(1-u))^{n-3/2}(v(1-v))^{n-3/2}}{(u+v)^{2n-4}}\, du \, dv,
\end{multline*}
and
\begin{multline*}
\|J_{1,t}(n)\|_{\mathbb{B}_1}^2
\le C\frac{\Gamma(2n-4)}{(\Gamma(n-1/2))^2}\left(\int_{0}^{1}v^2(1-v)^{n-3/2}\, dv\right)\\\times\left(\int_{0}^{\infty}\frac{z^{n-3/2}}{(1+z)^{2n-4}}\,dz\right)\le \frac{C}{n^6};
\end{multline*}
\begin{multline*}
\|J_{2,t}(n)\|_{\mathbb{B}_1}^2
=\frac{4\Gamma(2n-2)}{\pi  (\Gamma(n-1/2))^2}\\\times\int_{0}^{1}\int_{0}^{1}\frac{(2u-1)(2v-1)(uv)^{n-1/2}((1-u)(1-v))^{n-3/2}}{(u+v)^{2n-2}}\, du \, dv,
\end{multline*}
and
\begin{multline*}
\|J_{2,t}(n)\|_{\mathbb{B}_1}^2
\le C\frac{\Gamma(2n-2)}{(\Gamma(n-1/2))^2}\left(\int_{0}^{1}v^2(1-v)^{n-3/2}\, dv\right)\\\times\left(\int_{0}^{\infty}\frac{z^{n-1/2}}{(1+z)^{2n-2}}\,dz\right)\le \frac{C}{n^4};
\end{multline*}
and finally,
\begin{equation*}
\|J_{3,t}(n)\|_{\mathbb{B}_1}^2
=\frac{16\Gamma(2n)}{\pi  (\Gamma(n-1/2))^2}\int_{0}^{1}\int_{0}^{1}\frac{(uv)^{n+3/2}((1-u)(1-v))^{n-3/2}}{(u+v)^{2n}}\, du \, dv,
\end{equation*}
and
\begin{multline*}
\|J_{3,t}(n)\|_{\mathbb{B}_1}^2
\le C\frac{\Gamma(2n)}{(\Gamma(n-1/2))^2}\left(\int_{0}^{1}v^4(1-v)^{n-3/2}\, dv\right)\\\times\left(\int_{0}^{\infty}\frac{z^{n+3/2}}{(1+z)^{2n}}\,dz\right)\le \frac{C}{n^4}.
\end{multline*}
We deduce \eqref{eq:general-smooth} from the previous estimates for $n\ge 4$. The remainder cases can be proved as \eqref{eq:particular} in the previous proposition and then,
\[
\left\|\frac{\partial}{\partial t}(K_t(n+1)-K_t(n))\right\|_{\mathbb{B}_1}\le C, \qquad n=1,2,3.\qedhere
\]
\end{proof}

\section{Proof of Corollary \ref{cor:poisson}}
\label{sec:poisson}
First, it is easy to check that
\[
P^{(\alpha,\beta)}_tf(n)=\sum_{m=0}^{\infty}f(m)\mathcal{K}_t^{(\alpha,\beta)}(m,n),
\]
with
\[
\mathcal{K}_t^{(\alpha,\beta)}(m,n)=\int_{-1}^{1}e^{-t\sqrt{1-x}}p_m^{(\alpha,\beta)}(x)p_n^{(\alpha,\beta)}(x)\, d\mu_{\alpha,\beta}(x).
\]
Then, we have the following result for the $\mathfrak{g}_k^{(\alpha,\beta)}$-function which is the analogue of Lemma \ref{lem:gk-L2}.
\begin{lem}
  Let $\alpha,\beta\ge -1/2$ and $k\ge 1$. Then
\begin{equation}
\label{eq:gk-L2-Poi}
\|\mathfrak{g}_k^{(\alpha,\beta)}(f)\|^2_{\ell^2(\mathbb{N})}=\frac{\Gamma(2k)}{2^{2k}}\|f\|^2_{\ell^2(\mathbb{N})}.
\end{equation}
\end{lem}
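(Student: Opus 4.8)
The plan is to mimic verbatim the proof of Lemma \ref{lem:gk-L2}, the only new ingredient being that the exponential weight in the Poisson kernel is $e^{-t\sqrt{1-x}}$ rather than $e^{-t(1-x)}$. First I would write, for $f\in\ell^2(\mathbb{N})$, that
\[
P^{(\alpha,\beta)}_tf(n)=\int_{-1}^1 e^{-t\sqrt{1-x}}F_{\alpha,\beta}(x)p_n^{(\alpha,\beta)}(x)\,d\mu_{\alpha,\beta}(x)=c_n^{(\alpha,\beta)}(e^{-t\sqrt{1-\cdot}}F_{\alpha,\beta}),
\]
so that differentiating $k$ times in $t$ (each derivative bringing down a factor $-\sqrt{1-x}$) gives
\[
\frac{\partial^k}{\partial t^k}P^{(\alpha,\beta)}_tf(n)=(-1)^k c_n^{(\alpha,\beta)}\big((1-\cdot)^{k/2}e^{-t\sqrt{1-\cdot}}F_{\alpha,\beta}\big).
\]

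Next I would insert this into the definition of $\mathfrak{g}_k^{(\alpha,\beta)}$, interchange the sum over $n$ with the $t$-integral, and apply Parseval's identity \eqref{ec:Plancherel} to the sum of squared coefficients; this collapses the sum into the integral $\int_{-1}^1 (1-x)^k e^{-2t\sqrt{1-x}}(F_{\alpha,\beta}(x))^2\,d\mu_{\alpha,\beta}(x)$. After a second application of Fubini--Tonelli to pull the $t$-integral inside, the only computation left is the elementary Gamma integral
\[
\int_0^\infty t^{2k-1}e^{-2t\sqrt{1-x}}\,dt=\frac{\Gamma(2k)}{(2\sqrt{1-x})^{2k}}=\frac{\Gamma(2k)}{2^{2k}(1-x)^k}.
\]

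The crucial observation is that the factor $(1-x)^k$ produced by the $k$-th derivative cancels exactly against the $(1-x)^{-k}$ coming from the $t$-integral, leaving $\frac{\Gamma(2k)}{2^{2k}}\int_{-1}^1(F_{\alpha,\beta}(x))^2\,d\mu_{\alpha,\beta}(x)$, which equals $\frac{\Gamma(2k)}{2^{2k}}\|f\|_{\ell^2(\mathbb{N})}^2$ by \eqref{ec:Plancherel} once more. This is precisely why the constant $\Gamma(2k)/2^{2k}$ is identical to the heat-semigroup case of Lemma \ref{lem:gk-L2}: the half-power $\sqrt{1-x}$ in the exponent is compensated by the squaring together with the weight $t^{2k-1}$. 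No genuine obstacle arises; the only point requiring care is the justification of the two interchanges of summation and integration, and this is immediate from Tonelli's theorem since every integrand is nonnegative and $f\in\ell^2(\mathbb{N})$.
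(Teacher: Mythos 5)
Your proposal is correct and is precisely the argument the paper intends: the text explicitly omits the proof, stating that it ``can be proved following step by step the proof of Lemma \ref{lem:gk-L2}'', and your computation carries this out, with the key cancellation of $(1-x)^k$ against $\Gamma(2k)/(2\sqrt{1-x})^{2k}$ correctly identified as the reason the constant is unchanged. No further comment is needed.
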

This lemma can be proved following step by step the proof of Lemma \ref{lem:gk-L2}, so we omit the details. Now, using polarization, we deduce the identity
\[
\sum_{n=0}^{\infty}f(n)h(n)=\frac{2^{2k}}{\Gamma(2k)}\sum_{n=0}^{\infty} \int_{0}^{\infty}t^{2k-1}\left(\frac{\partial^k}{\partial t^k}P_t^{(\alpha,\beta)}f(n)\right)\left(\frac{\partial^k}{\partial t^k}P_t^{(\alpha,\beta)}h(n)\right)\, dt.
\]
From this fact, we obtain the inequality
\[
\|f\|_{\ell^p(\mathbb{N},w)}\le C \|\mathfrak{g}_k^{(\alpha,\beta)}(f)\|_{\ell^p(\mathbb{N},w)}
\]
from the direct inequality
\begin{equation}
\label{eq:gk-direct-Poi}
\|\mathfrak{g}_k^{(\alpha,\beta)}(f)\|_{\ell^p(\mathbb{N},w)}\le C \|f\|_{\ell^p(\mathbb{N},w)}
\end{equation}
as we did in the proof of Theorem \ref{th:main}. Finally, inequality \eqref{eq:gk-direct-Poi} is an immediate consequence of the following lemma.
\begin{lem}
\label{lem:gkPoi-gkHeat}
Let $\alpha,\beta>-1$, then
\[
\mathfrak{g}_{k}^{(\alpha,\beta)}(f)(n)\le \sum_{j=0}^{[k/2]} A_j g_{k-j}^{(\alpha,\beta)}(f)(n),
\]
where $A_j$ are some constants and $[\cdot]$ denotes the floor function.
\end{lem}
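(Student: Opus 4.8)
The plan is to differentiate the subordination identity \eqref{eq:poisson} directly and to reorganise the result as a finite sum of integral operators acting on the heat derivatives $\partial_s^{k-j}W_s^{(\alpha,\beta)}f(n)$, each of which will be shown to be bounded from the relevant $\mathbb{B}_{k-j}$-space into $\mathbb{B}_k$. First I would fix $n$ and $u$ and note that, writing $h(s)=W_s^{(\alpha,\beta)}f(n)$, the integrand in \eqref{eq:poisson} is $h(t^2/(4u))$, a composition of $h$ with the quadratic $t\mapsto t^2/(4u)$. Since this quadratic has vanishing third derivative, the chain rule (Fa\`a di Bruno) collapses to a finite sum
\[
\frac{\partial^k}{\partial t^k}W_{t^2/(4u)}^{(\alpha,\beta)}f(n)=\sum_{j=0}^{[k/2]}c_{k,j}\,(4u)^{-(k-j)}\,t^{k-2j}\,\partial_s^{k-j}W_s^{(\alpha,\beta)}f(n)\big|_{s=t^2/(4u)},
\]
with explicit combinatorial constants $c_{k,j}$. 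Differentiating \eqref{eq:poisson} under the integral sign and interchanging with the finite sum then expresses $\partial_t^k P_t^{(\alpha,\beta)}f(n)$ as $\sum_{j=0}^{[k/2]}c_{k,j}\,t^{k-2j}$ times an integral in $u$ of $\partial_s^{k-j}W_s^{(\alpha,\beta)}f(n)$. Taking $\mathbb{B}_k$-norms and applying the triangle inequality reduces the lemma to bounding each term by $C\,g_{k-j}^{(\alpha,\beta)}(f)(n)=C\|\partial_s^{k-j}W_s^{(\alpha,\beta)}f(n)\|_{\mathbb{B}_{k-j}}$.

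Next, for a fixed $j$ (write $\ell=k-j$) I would change variables $u\mapsto s=t^2/(4u)$ in the inner integral. After simplification the outer power of $t$ collapses to $t^{1-k}$, and the term becomes a fixed multiple of $\|t^{1-k}\int_0^\infty e^{-t^2/(4s)}s^{\ell-3/2}\,\partial_s^{\ell}W_s^{(\alpha,\beta)}f(n)\,ds\|_{\mathbb{B}_k}$. This is precisely the $\mathbb{B}_k\leftarrow\mathbb{B}_\ell$ operator norm applied to $\partial_s^{\ell}W_s^{(\alpha,\beta)}f(n)$, with integral kernel $t^{1-k}e^{-t^2/(4s)}s^{\ell-3/2}$. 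Absorbing the weights $t^{(2k-1)/2}$ and $s^{-(2\ell-1)/2}$ into the kernel, it reduces to the strikingly $\ell$-independent kernel $\tilde K(t,s)=t^{1/2}s^{-1}e^{-t^2/(4s)}$ acting on $L^2((0,\infty),dt)\leftarrow L^2((0,\infty),ds)$.

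The crucial and most delicate point is that the naive route, namely Minkowski's integral inequality followed by Cauchy-Schwarz, \emph{fails} here: it produces the logarithmically divergent factor $(\int_0^\infty s^{-1}\,ds)^{1/2}$, so the square-function ($L^2$) structure must be kept intact rather than broken up. Instead I would invoke Schur's test with the power test function $h(x)=x^{-1/2}$: a direct Gamma-integral computation gives $\int_0^\infty \tilde K(t,s)s^{-1/2}\,ds=\sqrt\pi\,t^{-1/2}$ and $\int_0^\infty \tilde K(t,s)t^{-1/2}\,dt=\sqrt\pi\,s^{-1/2}$, so $\tilde K$ defines a bounded operator on $L^2(0,\infty)$ of norm at most $\sqrt\pi$. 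Because $\tilde K$ does not depend on $\ell$, this bound is uniform in $j$, so every term is controlled by $C\,g_{k-j}^{(\alpha,\beta)}(f)(n)$; summing over $0\le j\le[k/2]$ then yields the asserted inequality with $A_j$ a fixed multiple of $|c_{k,j}|$.

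Beyond the Schur-test step, the only remaining matters are routine: the justification of differentiation under the integral sign in \eqref{eq:poisson} (supplied by the rapid decay of the subordinating kernel together with the boundedness and smoothness of $s\mapsto W_s^{(\alpha,\beta)}f(n)$ for $f\in\ell^2(\mathbb{N})$), and the bookkeeping of the constants $c_{k,j}$, whose precise values are irrelevant since the statement only records the existence of the $A_j$. I expect the Schur estimate, and in particular the realisation that the reduced kernel is independent of $\ell$, to be the conceptual heart of the argument.
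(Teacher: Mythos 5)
Your proof is correct, and the first half (the Fa\`a di Bruno collapse of $\partial_t^k W_{t^2/(4u)}^{(\alpha,\beta)}f(n)$ to a sum over $0\le j\le [k/2]$, differentiation under the integral, and the triangle inequality) coincides with the paper's. Where you diverge is the key estimate. The paper applies Minkowski's integral inequality in the subordination variable $u$ \emph{first}, keeping the $\mathbb{B}_k$-norm inside the $du$-integral; then, for each fixed $u$, the substitution $s=t^2/(4u)$ in the inner $t$-integral shows that this inner norm equals exactly $(4u)^{k-j}\tfrac{1}{\sqrt 2}\,g_{k-j}^{(\alpha,\beta)}(f)(n)$, so the power of $u$ cancels the prefactor and only $\int_0^\infty e^{-u}u^{-1/2}\,du=\sqrt\pi$ remains; each $P_j(n)$ is computed to be exactly $\tfrac{1}{\sqrt2}\,g_{k-j}^{(\alpha,\beta)}(f)(n)$. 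You instead change variables $u\mapsto s$ at the outset, producing the explicit kernel $t^{1/2}s^{-1}e^{-t^2/(4s)}$ on $L^2(0,\infty)$, and invoke Schur's test with $h(x)=x^{-1/2}$. Both routes are valid; yours has the appeal that the reduced kernel is independent of $\ell=k-j$, while the paper's is more elementary (no Schur test) and yields exact constants. One caveat: your assertion that ``Minkowski's integral inequality followed by Cauchy--Schwarz fails'' is true only for your reorganized $(t,s)$-kernel, where $\|\tilde K(\cdot,s)\|_{L^2(dt)}=s^{-1/2}$ is not integrable against $\|G\|_{L^2}$; applied in the $u$-variable before the change of variables, Minkowski is precisely what the paper does and it succeeds cleanly, so the Schur test, while a fine device here, is not actually forced by the structure of the problem.
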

\begin{proof}
First, we observe that
\[
\frac{\partial^k}{\partial t^k}h\left(\frac{t^2}{4u}\right)=\sum_{j=0}^{[k/2]}B_j\left.\frac{\partial^{k-j} }{\partial s^{k-j}}h(s)\right|_{s=\frac{t^2}{4u}}\frac{t^{k-2j}}{(4u)^{k-j}},
\]
for some constants $B_j$. Then, from \eqref{eq:poisson}, we have
\[
\frac{\partial^k}{\partial t^k}P^{(\alpha,\beta)}_tf(n)=\frac{1}{\sqrt{\pi}}\sum_{j=0}^{[k/2]}B_j\int_{0}^{\infty}\frac{e^{-u}}{\sqrt{u}}
\left(\left.\frac{\partial^{k-j} }{\partial s^{k-j}}W_{s}^{(\alpha,\beta)}f(n)\right|_{s=\frac{t^2}{4u}}\right)\frac{t^{k-2j}}{(4u)^{k-j}}\, du
\]
and, by Minkowsk's integral inequality,
\[
\mathfrak{g}_k^{(\alpha,\beta)}(f)(n)\le \sum_{j=0}^{[k/2]}B_j P_j(n)
\]
where
\begin{multline*}
P_j(n)\\=\frac{1}{\sqrt{\pi}}\int_{0}^{\infty}\frac{e^{-u}}{\sqrt{u}(4u)^{k-j}}\left(\int_{0}^{\infty}t^{4k-4j-1}\left(\left.\frac{\partial^{k-j} }{\partial s^{k-j}}W_{s}^{(\alpha,\beta)}f(n)\right|_{s=\frac{t^2}{4u}}\right)^2\, dt\right)^{1/2}\, du.
\end{multline*}
Now, by using an appropriate change of variables, we have
\begin{align*}
P_j(n)&=\frac{1}{\sqrt{2\pi}}\int_{0}^{\infty}\frac{e^{-u}}{\sqrt{u}}\left(\int_{0}^{\infty}s^{2k-2j-1}\left(\frac{\partial^{k-j} }{\partial s^{k-j}}W_{s}^{(\alpha,\beta)}f(n)\right)^2\, ds\right)^{1/2}\, du\\&=\frac{1}{\sqrt{2}}g_{k-j}^{(\alpha,\beta)}(f)(n)
\end{align*}
and the result follows.
\end{proof}
\section{Proof of Theorem \ref{th:laplace-multi}}
\label{sec:Proof-Laplace}
We need only prove that
\begin{equation}
\label{eq:laplace}
g_1^{(\alpha,\beta)}(T_Mf)(n)\le C g_2^{(\alpha,\beta)}(f)(n),
\end{equation}
since by Theorem \ref{th:main} we get that
\[
\|T_Mf\|_{\ell^p(\mathbb{N},w)}\le C \|g_1^{(\alpha,\beta)}(T_Mf)\|_{\ell^p(\mathbb{N},w)}\le C \|g_2^{(\alpha,\beta)}(f)\|_{\ell^p(\mathbb{N},w)}\le C \|f\|_{\ell^p(\mathbb{N},w)}.
\]

Moreover, it is enough to prove \eqref{eq:laplace} for sequences in $c_{00}$, the space of sequences having a finite number of non-null terms. First, we have
\[
T_Mf(n)=-\int_{0}^{\infty}a(s)\frac{\partial}{\partial s}W_s^{(\alpha,\beta)}f(n)\, ds,
\]
which is an elementary consequence of the relation
\begin{multline*}
\int_{-1}^{1}M(1-x)p_m^{(\alpha,\beta)}(x)p_n^{(\alpha,\beta)}(x)\, d\mu_{\alpha,\beta}(x)\\
\begin{aligned}
&= \int_{0}^{\infty}a(s)\int_{-1}^{1}(1-x)e^{-s(1-x)}p_m^{(\alpha,\beta)}(x)p_n^{(\alpha,\beta)}(x)\, d\mu_{\alpha,\beta}(x)\, ds
\\&=-\int_{0}^{\infty}a(s)\frac{\partial}{\partial s}\int_{-1}^{1}e^{-s(1-x)}p_m^{(\alpha,\beta)}(x)p_n^{(\alpha,\beta)}(x)\, d\mu_{\alpha,\beta}(x)\, ds.
\end{aligned}
\end{multline*}
Then, applying the semigroup property of $W_t^{(\alpha,\beta)}$ we obtain
\begin{equation*}
W_t^{(\alpha,\beta)}(T_Mf)(n)=
-\int_{0}^{\infty}a(s)\frac{\partial}{\partial s}W_{s+t}^{(\alpha,\beta)}f(n)\, ds
\end{equation*}
and hence,
\begin{align*}
\frac{\partial}{\partial t}W_t^{(\alpha,\beta)}(T_Mf)(n)&=-\int_{0}^{\infty}a(s)\frac{\partial}{\partial t}\frac{\partial}{\partial s}W_{s+t}^{(\alpha,\beta)}f(n)\, ds\\&=-\int_{0}^{\infty}a(s)\frac{\partial^2}{\partial s^2}W_{s+t}^{(\alpha,\beta)}f(n)\, ds.
\end{align*}
In this way,
\begin{align*}
\left|\frac{\partial}{\partial t}W_t^{(\alpha,\beta)}(T_Mf)(n)\right|&\le C \int_{t}^{\infty}s\left|\frac{\partial^2}{\partial s^2}W_{s}^{(\alpha,\beta)}f(n)\right|\,\frac{ds}{s}\\&\le C t^{-1/2}\left(\int_{t}^{\infty}s^2\left|\frac{\partial^2}{\partial s^2}W_{s}^{(\alpha,\beta)}f(n)\right|^2\,ds\right)^{1/2}.
\end{align*}
Finally,
\begin{align*}
(g_1^{(\alpha,\beta)}(T_Mf)(n))^2&=\int_{0}^{\infty} t\left|\frac{\partial}{\partial t}W_t^{(\alpha,\beta)}(T_Mf)(n)\right|^2\, dt
\\& \le C
\int_{0}^{\infty}\int_{t}^{\infty}s^2\left|\frac{\partial^2}{\partial s^2}W_{s}^{(\alpha,\beta)}f(n)\right|^2\,ds\, dt\\
&=C
\int_{0}^{\infty}s^3\left|\frac{\partial^2}{\partial s^2}W_{s}^{(\alpha,\beta)}f(n)\right|^2\,ds=C (g_2^{(\alpha,\beta)}f(n))^2
\end{align*}
and the proof of \eqref{eq:laplace} is completed.

\end{document}